\newtheoremstyle{mystyle1}% % Name
  {}%                      % Space above
  {}%                      % Space below
  {\normalfont}%           % Body font
  {}%                      % Indent amount
  {\bfseries}%             % Theorem head font
  {}%                      % Punctuation after theorem head
  {10pt}%                  % Space after theorem head, ' ', or \newline. Can't use `\,'.
  {}%                      % Theorem head spec (can be left empty, meaning `normal')
\theoremstyle{mystyle1}
\newtheorem{definition}{Definition} %%%% Using the amsthm package, defines the form of the definition environment. Defines a new counter called `definition'.
\newtheoremstyle{mystyle2}% % Name
  {}%                      % Space above
  {}%                      % Space below
  {\itshape}%           % Body font
  {}%                      % Indent amount
  {\bfseries}%             % Theorem head font
  {}%                      % Punctuation after theorem head
  {10pt}%                  % Space after theorem head, ' ', or \newline. Can't use `\,'.
  {}%                      % Theorem head spec (can be left empty, meaning `normal')
\theoremstyle{mystyle2}
\newtheorem{proposition}[definition]{Proposition} %%%% Defines the form of the proposition environment (the first argument). `[definition]' (the second argument) means that the proposition environment (the first argument) use the same numbering as the definition environment.
\newtheorem{corollary}[definition]{Corollary} %%%% Defines the form of the corollary environment.
\newtheorem{lemma}[definition]{Lemma} %%%% Defines the form of the lemma environment.
\newcommand{\sem}[1]{[ \! [ #1 ] \! ]} % Notation for semantic value.
\DeclareMathOperator{\SF}{\normalfont{\textbf{SF}}}
\DeclareMathOperator{\LK}{\normalfont{\textbf{LK}}}
\DeclareMathOperator{\CPC}{\normalfont{\textbf{CPC}}}
\DeclareMathOperator{\IPC}{\normalfont{\textbf{IPC}}}
\DeclareMathOperator{\HT}{\normalfont{\textbf{HT}}}
\DeclareMathOperator{\fin}{fin}
\DeclareMathOperator{\Fm}{Form}
\DeclareMathOperator{\ClForm}{ClForm}
\DeclareMathOperator{\NDE}{NDE}
\DeclareMathOperator{\Eq}{Eq}
\DeclareMathOperator{\ST}{ST}
\DeclareMathOperator{\ClST}{ClST}
\DeclareMathOperator{\Var}{Var}
\DeclareMathOperator{\cl}{cl}
\begin{document}

\setcounter{tocdepth}{7}
\setcounter{secnumdepth}{5} % How deep you want to count in terms of section, subsection, subsubsection, etc. Default is 4, up until subsubsection.

\title[Wright's Strict Finitistic Propositional Logic]{Wright's Strict Finitistic Logic in the Classical Metatheory: The Propositional Case}

\author*{\fnm{Takahiro} \sur{Yamada}}\email{g.yamadatakahiro@gmail.com}

\affil{\orgdiv{The Department of Philosophy and Religious Studies}, \orgname{Utrecht University}, \orgaddress{\street{Janskerkhof 13}, \city{Utrecht}, \postcode{3512 BL}, \state{Utrecht}, \country{the Netherlands}}}

\abstract{Crispin Wright in his 1982 paper argues for strict finitism, a constructive standpoint that is more restrictive than intuitionism. In its appendix, he proposes models of strict finitistic arithmetic. They are tree-like structures, formed in his strict finitistic metatheory, of equations between numerals on which concrete arithmetical sentences are evaluated. As a first step towards classical formalisation of strict finitism, we propose their counterparts in the classical metatheory with one additional assumption, and then extract the propositional part of `strict finitistic logic' from it and investigate. We will provide a sound and complete pair of a Kripke-style semantics and a sequent calculus, and compare with other logics. The logic lacks the law of excluded middle and Modus Ponens and is weaker than classical logic, but stronger than any proper intermediate logics in terms of theoremhood. In fact, all the other well-known classical theorems are found to be theorems. Finally, we will make an observation that models of this semantics can be seen as nodes of an intuitionistic model.}

\keywords{strict finitism, Crispin Wright, constructivism, finitism, classical reconstruction}

%%%%%%%%%%%%%%%%
%%%%%%%%%%%%%%%%
\def\fCenter{\ \vdash\ } %%%%%%%%%%%% \fCenter is the point of alignment in the prooftree environment. This line defines it to be \vdash.
%%%%%%%%%%%%%%%%
%%%%%%%%%%%%%%%%

\maketitle

\section{Introduction}

\subsection{Aims}\label{section: Aims}
The present paper provides and explores the propositional part of `strict finitistic logic' according to Wright, as obtained via classical counterparts of his strict finitistic models of arithmetic, under an additional assumption we call the `atomic prevalence condition'. `Strict finitism' is the view of mathematics according to which an object, or a number in particular, is admitted iff it is constructible in practice, and a statement holds iff it is verifiable in practice. It is constructivist, in that it accepts a number and a statement on grounds of our cognitive capabilities; and more restrictive than intuitionism, since it uses the notion of `in practice' in place of intuitionism's `in principle'. Strict finitism is finitistic, because as a consequence, it rejects the idea that there are infinitely many natural numbers. Strict finitistic logic is meant to be the abstract system of logical reasoning concerning actually constructible objects, based on actual verifiability.

Among the literary sources, Wright's in 1982 \cite{Wright1982}, to us, is the most philosophically inspirational and appears to be in possession of most formal-logical contents. While \cite{Wright1982} is a philosophical rebuttal to Dummett's criticism \citep{Dummett1975} against strict finitism, its appendix contains a proposal of the models of strict finitistic arithmetic\footnote{
We must note \cite{Yessenin-Volpin1970} as an important precursor to Dummett's and Wright's strict finitism.
}.
Wright sketches an `Outline of a strict finitist semantics for first-order arithmetic' \cite[p.167]{Wright1982}, and describes notions not unlike the models and the forcing conditions of the intuitionistic Kripke semantics. A `strict finitistic tree' is a model of strict finitistic arithmetic which represents practically possible histories of an ordinary subject's arithmetical development. Each node stands for a stage of construction, and is assigned the numbers (numerals e.g. $\overline{0} + \overline{0}$) and the arithmetical truths (equations e.g. $\overline{0} + \overline{0} = \overline{0}$) the agent has actually constructed and learnt. The `verification-conditions' \cite[p.169]{Wright1982} determine what formulas in the language of first-order arithmetic are forced at a node, based on the assigned truths as atoms.

Our interest lies in what kind of sentence is valid according to Wright's semantics, i.e. forced at every node in every strict finitistic tree. While there has been a certain amount of research that involves elucidation of the structure of the strict finitistic numbers\footnote{
E.g., see \cite[pp.474-5]{Magidor2012} and \cite[pp.314-5]{Dean2018}.
},
we hope, by our research, we could set foot in a path to understanding strict finitistic reasoning.

\subsection{Methods}\label{section: Methods}
Wright introduced the trees and the conditions in his strict finitistic metatheory. This causes some problems. Firstly, further mathematical investigations are hindered. Strict finitism rejects mathematical induction and the law of excluded middle, the metalevel principles allowed in the classical metatheory. Secondly, it is not clear how to interpret the strict finitistic trees. They are structures `small enough to practically construct'. But this notion can be classically inconsistent. At least the following are supposed to hold\footnote{
We will touch upon a fourth condition and see how it is realised in our system, after proposition \ref{proposition: Assertibility and validity calculations}.
}.
\begin{enumerate}
  \item $\overline{0}$ is a practically constructible numeral.
  \item If a numeral is practically constructible, then so are all of its direct successors.
  \item There is a numeral that is not practically constructible.
\end{enumerate}
So some path of a strict finitistic tree cannot be regarded as isomorphic to an initial segment of $\mathbb{N}$, since these conditions would entail a contradiction. Indeed, these are common assumptions of strict finitism and the main reason why formalising it has been a challenge.

With these considerations, we will adopt the classical metatheory to investigate in, and consider classical structures with some constraints as our counterparts of the strict finitistic trees, rather than try to render them. Our `models of arithmetic' will be intuitionistic Kripke frames with an assignment of arithmetical sentences, which do not involve variables. After investigating them, we define and explore an abstract, Kripke-style `strict finitistic semantics' to capture the validity in them schematically. The models in this system, the `strict finitistic models', deal with formulas built from variables which can stand for concrete sentences via substitution. The same kinds of constraint will be applied, and all semantic properties are then carried over. We will then define a sequent calculus, and prove it to be sound and complete with regard to the strict finitistic semantics.

We will maintain Wright's forcing conditions. His condition of negation (and that of implication, section \ref{subsection: Characteristics (i): Strict finitistic implication}) will then affect how we interpret the forcing relation. Let us write $k \models A$ for a sentence or formula $A$ holding at a node $k$ of a tree-like structure we consider. The condition is $k \models \neg A$ iff $l \not \models A$ for any node $l$. Thus strict finitistic negation stands for practical unverifiability. Also, it is `global' in that if $\neg A$ holds somewhere in the structure, it holds everywhere. Therefore $A$ of $k \models \neg A$ may involve an expression the agent has not constructed, and as a result $k \models \neg A$ may not represent the agent's judgement at step $k$. Rather a safer interpretation would be that $k \models A$ represents the agent's knowledge at $k$ if $A$ is atomic; otherwise (or in all cases), it only displays our judgement as the classically-thinking model-maker.\footnote{
We thank an anonymous referee for suggesting that therefore, some might say that this article's logic is the logic of our judgement about practical constructibility, not the logic of strict finitistic reasoning itself. It would indeed be a sensible reaction, and we agree that it might be the case. The genuine `logic of strict finitistic reasoning' may be the one that captures an ordinary subject's judgement about their own practical constructibility. It may be Wright's original semantics in the strict finitistic metatheory. It is a model-theoretic approach to understanding an ordinary subject's assertion, e.g., that a statement is unverifiable to themselves. The models there are strict finitistic themselves, since they need be e.g. `small enough'. We hope that this article contributes to making the semantics understandable to us, classical thinkers, by providing its classical counterpart.
}

We assume the following `finitistic' constraints on our models of arithmetic.
\begin{enumerate}
  \item The frame is finite.
  \item Each node forces at most finitely many atomic sentences (the finite verification condition).
\end{enumerate}
Each model stands for one collection of the entire possibilities of an agent's actual construction and verification. Our models will be finite by (i), but indeed include ones that are not `small enough'. We accept them as part of our classical idealisation, in hope that the totality of the valid sentences is not affected when seen schematically.

We will assume an auxiliary condition. We say a sentence $A$ is `assertible' in a model of arithmetic if $k \models A$ for some $k$. A sentence $B$ is `prevalent' if
\begin{itemize}
  \item for any node $k$, there is a $k' \geq k$ such that $k' \models B$, where $\leq$ is the partial order on the structure\footnote{
`Assertibility' is Wright's terminology \cite[p.170]{Wright1982}, while `prevalence' is ours.
}.
\end{itemize}
Assertibility is practical verifiability in a weaker sense, while prevalence is stronger, since $A$ is verified in a case and $B$ is eventually verified in any case. A model of arithmetic has the `atomic prevalence property' if all assertible, atomic sentences are prevalent. Throughout this article, we will only consider ones with this property. This is a strong assumption that collapses the two notions, and indeed, Wright noted that we cannot generally assume it \cite[pp.168-9]{Wright1982}. There is no guarantee that a truth found in one history is found in every other history, because an agent's construction power would reduce as the construction continues. A truth may require so tremendous an amount of power, that they cannot discover it after obtaining some others. However, to make our study feasible, we will restrict the scope of discussion. We will touch upon this issue in the ending remarks. 

We will also use, only as a tool, a structure $S_\infty$ with $\omega$-sequences as its branches. Our models of arithmetic will be defined to be $S_\infty$'s finite parts. Certainly, it does not satisfy the `finitistic' constraints. But it would appear to be a natural extension of the models, has the atomic prevalence property, and does not affect the totality of the valid sentences (proposition \ref{proposition: Model contraction in S}).

\subsection{Characteristics (i): Strict finitistic implication}\label{subsection: Characteristics (i): Strict finitistic implication}
Strict finitistic implication $A \to B$ means that $B$ holds after $A$ sooner or later. We translate Wright's condition as that $k \models A \to B$ iff for any node $k' \geq k$, if $k' \models A$, then $k'' \models B$ for some node $k'' \geq k'$. This is a version of intuitionistic implication that allows for a time-gap between $A$ and $B$. As we see it, Wright did not restrict the gap, since every strict finitistic tree is supposed to be small enough. As part of our classical idealisation, we accept any finite gap (cf. the ending remarks).\footnote{
In the style of the BHK interpretation, $A \to B$ might mean the existence of a method of transforming any verification of $A$ into one for $B$ within a `small enough' number of steps. This is a version of intuitionistic implication that respects the time scale of transformation.
}

Our implication generalises the atomic prevalence property to all complex sentences (proposition \ref{proposition: Prevalence property}). Under prevalence, an assertible, atomic sentence describes a fact in the realm of the foreseeable future, which figuratively is a `candy bin' of a jumble of facts, regardless of the order of their verification. Implication's condition matches this idea: $A \to B$ means that if $A$ holds in the future, so does $B$, regardless of which comes first. As a result, assertibility and prevalence are equivalent in general, and stand for practical verifiability equally.

This idea of `order-less truths' may suggest classical logic. In fact, assertibility can be calculated just as the classical truth-values (proposition \ref{proposition: Assertibility and validity calculations}), and assertibility in all strict finitistic models coincides with provability in classical propositional logic \textbf{CPC} (corollary \ref{corollary: Assertibility is CPC})\footnote{
This conforms to the idea the strict finitist would maintain that all and only the practically verifiable statements are properly subject to classical logic. Cf. e.g., \cite[p.115]{Wright1982}.
}.
Accordingly, most of the classical tautologies (and rules) are found valid (propositions \ref{proposition: Assertibility and validity calculations}, \ref{proposition: HT subseteq SF}). Exceptions are the law of the excluded middle and Modus Ponens (proposition \ref{proposition: Failure of LEM and MP}).

With generalised prevalence, we can aptly treat $\neg A$ as an abbreviation of ${A \to \bot}$, where $\bot$ stands for a statement practically unverifiable. Formally, the prevalence property implies that $k \models A \to \bot$ iff $l \not \models A$ for all $l$ (cf. proposition \ref{proposition: Prevalence property}). Conceptually, too, $A \to \bot$ and $\neg A$ coincide. $A \to \bot$ means that if $A$ eventually holds, then so does $\bot$. Since the consequent is impossible, it means that $A$ does not eventually hold, i.e., is practically unverifiable.

\subsection{Characteristics (ii): Relation with intuitionism}\label{subsection: Characteristics (ii): Relation with intuitionism}
Some take the following as the conceptual relation between strict finitism and intuitionism:
\begin{itemize}
  \item a statement is verifiable in principle iff it is verifiable in practice with some finite extension of practical resources\footnote{
Wright brings up an identification scheme of `decidability in principle' along this line \cite[p.113]{Wright1982}. Also \cite[p.147]{Tennant1997} and \cite[p.278]{Murzi2010} suppose a relation of this kind.
}.
\end{itemize}
We will prove results that seem to formalise this identification scheme. Strict finitistic models with finite frames, arranged in ascending order of practical verification power, can be viewed as the nodes of an intuitionistic model, and vice versa. We will formalise this order using the notion of `generations'. Let $\mathcal{U}$ be a set of strict finitistic finite models. Then the set $A_W$ of the variables assertible in $W \in \mathcal{U}$ determines the assertibility, and hence the practical verifiability of every formula in $W$. Let us write $W \preceq W'$ if $W$ is an initial part of another model $W'$ and $A_W \subseteq A_{W'}$. Then $W \preceq W'$ means that $W'$ represents the same agent as $W$, but with more verificatory resources, typically from a later generation.

We will call $G := \langle \mathcal{U}, \preceq \rangle$ a `generation structure' (`g-structure'). This is an intuitionistic frame with strict finitistic finite models as its nodes, and thereby serves as a framework to express practical verifiability with increasing power. Here two evaluations on $G$ are naturally induced, one intuitionistic, and the other in the strict finitistic style. Since $A_W$ summarises practical verifiability in $W$ as a node of $G$, we define a valuation $v$ by $W \in v(p)$ iff $p \in A_W$. Then $v$ defines an intuitionistic forcing relation $\Vdash$. Also a new evaluation, the `generation forcing relation' $\Vvdash$, is defined between the pairs of a $W \in \mathcal{U}$ and a node of $W$ and the formulas, using the valuation in each $W \in \mathcal{U}$ for the base case. Implication will be defined so that each $W$ is closed under it:
\begin{itemize}
  \item $W, k \Vvdash A \to B$ iff for any $W' \succeq W$ and node $k'$ of $W'$, if $k' \geq k$ and $W', k' \Vvdash A$, then for some $k''$ of $W'$, $k'' \geq k'$ and $W', k'' \Vvdash B$.
\end{itemize}
$W, k \Vvdash A \to B$ means no matter how power will increase, $B$ comes after $A$ `in a short while', within the same generation; and hence $\Vvdash$ thus defined is a strict finitistic forcing relation that takes into account the increasement in power. Validity according to $\Vvdash$ is being forced at all pairs. We will show that an intuitionistic model with the finite verification condition induces a g-structure; and the induced evaluations $\Vvdash$ and $\Vdash$ always correspond (propositions \ref{proposition: G to J transformation}, \ref{proposition: J to G transformation}). Further, we will prove that validity in all g-structures coincides with provability in intuitionistic propositional logic \textbf{IPC} (proposition \ref{proposition: Validity in G is IPC}).

These results may formalise the aforementioned conceptual relation. Any statement verifiable in principle is verified in practice within finitely many steps with some extension of power; and the totality of the statements verifiable in principle coincides with that of those in practice under extension of power.

\subsection{The structure of the paper and comparison results}
In section \ref{section: The languages}, we define two languages $\mathcal{L}_{\mathcal{S}}$ and $\mathcal{L}_{\SF}$. In \ref{section: The models of arithmetic}, we define $S_\infty$ with regard to $\mathcal{L}_{\mathcal{S}}$ and our models of arithmetic via $S_\infty$; and look into their semantic properties. The formulas in this context are concrete sentences. To capture validity schematically, we will in section \ref{section: The strict finitistic Kripke semantics} use $\mathcal{L}_{\SF}$, a standard language of propositional logic with variables, to define and study the strict finitistic semantics. In section \ref{section: A complete sequent calculus SF}, we define a sequent calculus $\SF$ and show that it is sound and complete with respect to the strict finitistic semantics. Our completeness proof is in the usual Henkin-style.

In section \ref{section: Relations with intermediate logics} we conduct a comparison with intermediate logics. Beside the relation mentioned right above, we will prove that our logic is weaker than \textbf{CPC}, but stronger than any proper intermediate logic -- in this context, we identify each logic with the set of their theorems, and write \textbf{SF} for strict finitistic logic. An intermediate logic is obtained by adding \textbf{CPC}'s theorems to \textbf{IPC} and taking the closure under Modus Ponens and substitution of formulas. A resulting logic is `proper' if it is not \textbf{CPC}. It is known that the intermediate logics form a lattice under the set-theoretic inclusion, and it is bounded by $\CPC$ and $\IPC$ as its maximum and minimum, respectively. $\CPC$ has a unique direct predecessor called the `logic of here-and-there' \textbf{HT}, characterised by the class of the intuitionistic models with only $2$ nodes (`here' and `there')\footnote{
It is axiomatised by $A \lor (A \to B) \lor \neg B$; and also called G\"{o}del 3-valued logic and Smetanich's logic. For a modern treatment and application of \textbf{HT}, see e.g. \cite{Lifschitz2001}.
}.
We will show $\HT \subsetneq \textbf{SF} \subsetneq \textbf{CPC}$ (corollary \ref{corollary: Assertibility is CPC}, proposition \ref{proposition: HT subseteq SF}) \footnote{
\textbf{SF} is not itself an intermediate logic, since it lacks Modus Ponens.
}.
Then we move on to how strict finitistic models can be seen as nodes of an intuitionistic model; and end this article with some remarks in section \ref{section: Ending remarks: Further topic}.

\section{Strict finitistic propositional logic}\label{section: Strict finitistic propositional logic}

\subsection{The languages}\label{section: The languages}
We use two languages, $\mathcal{L}_{\SF}$ and $\mathcal{L}_{\mathcal{S}}$. $\mathcal{L}_{\SF}$ is the language of strict finitistic logic. It is a standard language of propositional logic: $\Var$ is the countable set of variables, and we define the set $\Fm$ of all formulas in $\mathcal{L}_{\SF}$ by
\begin{itemize}
  \item $\Fm ::= \bot \mid \Var \mid \Fm \land \Fm \mid \Fm \lor \Fm \mid \Fm \to \Fm$.
\end{itemize}
We use $\mathcal{L}_{\mathcal{S}}$ to describe our models of strict finitistic arithmetic. It is a language of quantifier-free arithmetic with no variables. We deal with numbers through their expressions, and they are the terms in $\mathcal{L}_{\mathcal{S}}$. We call them after Wright \cite[p.167]{Wright1982} `natural-number denoting expressions' (or `NDEs')\footnote{
Wright \cite[p.167]{Wright1982} writes `nde' for the singular and `nde's' for the plural form.
}.
\begin{itemize}
  \item $\NDE ::= \overline{0} \mid S(\NDE) \mid \NDE + \NDE \mid \NDE \cdot \NDE$.
\end{itemize}
Arithmetical propositions are expressed by the formulas in $\mathcal{L}_{\mathcal{S}}$. The atoms are $\bot$ and the `equations' between NDEs. We let $=$ be the only predicate symbol, and $\Eq = \{ x = y \mid x, y \in \NDE \}$. All formulas in $\mathcal{L}_{\mathcal{S}}$ are closed, and we define their set $\ClForm$ by
\begin{itemize}
  \item $\ClForm \! ::= \! \bot \mid \Eq \mid \ClForm \land \ClForm \mid \ClForm \lor \ClForm \! \mid \! \ClForm \! \to \! \ClForm$.
\end{itemize}
In both, we let $\neg A$ be an abbreviation of $A \to \bot$. We note that neither has $\top$.

\subsection{The models of arithmetic}\label{section: The models of arithmetic}
In this section we provide our models of strict finitistic arithmetic, which are counterparts of Wright's strict finitistic trees. Our main focus is on what kind of sentence is valid in them. To see it, we will define a structure we call $S_\infty$. Having it among the models does not change what is valid in all models (proposition \ref{proposition: Model contraction in S}), and the models' properties will be plain from seeing it.

$S_\infty$ is one fixed rooted tree-like structure, where NDEs and equations are assigned to the nodes. $S_\infty$ is a tuple $\langle T, \leq_T, M, E  \rangle$, where $\langle T, \leq_T \rangle$ is a finitely-branching tree each of whose branches is an $\omega$-sequence; $M$ is a mapping from $T$ to $\mathcal{P}(\NDE)$; $E$ is one from $T$ to $\mathcal{P}(\Eq)$. Conceptually, $M(t)$ with $t \in T$ (ideally) stands for the set of the NDEs the agent has actually constructed by stage $t$; $E(t)$ the set of the equations, the arithmetical facts, they have actually verified and learnt by $t$. Letting $t_0$ be the root, we set $M(t_0) = \{ \overline{0} \}$ and $E(t_0) = \emptyset$; and for each $t \in T$ and $x, y \in M(t)$,
\begin{enumerate}
  \item if $S(x) \notin M(t)$, then we let there be a direct successor $t'$ such that $M(t') = M(t) \cup \{ S(x) \}$ and $E(t') = E(t)$;
  \item similar for $x+y$ and $x \cdot y$; and
  \item if $\sem{x} = \sem{y}$ and $x = y \notin E(t)$, where $\sem{z}$ is the natural number that $z \in \NDE$ denotes, then we let there be a direct successor $t'$ such that $M(t') = M(t)$ and $E(t') = E(t) \cup \{ x = y \}$.
\end{enumerate}
\noindent (We assume $\sem{ \, }$ in the background in the natural way.) So the agent proceeds one by one. $S_\infty$ is not officially a model of arithmetic, but it serves as a useful tool for our investigation.

Closed formulas in $\mathcal{L}_{\mathcal{S}}$ are evaluated at each node of $S_\infty$. We interpret Wright's forcing conditions as follows\footnote{
Wright calls them `verification-conditions' in the appendix of \cite{Wright1982}. In section 3 of \cite{Wright1982}, he analyses the notion of actual verification under the name of `verification'.
}.
\begin{definition}[Actual verification conditions]\index{actual verification condition}\label{Actual verification conditions}
  Let $t \in T$. For any $p \in \Eq$, $t \models_{S_\infty} p$ iff $p \in E(t)$. For any $A, B \in \ClForm$,
  \begin{enumerate}
    \item $t \models_{S_\infty} A \land B$ iff $t \models_{S_\infty} A$ and $t \models_{S_\infty} B$;
    \item $t \models_{S_\infty} A \lor B$ iff $t \models_{S_\infty} A$ or $t \models_{S_\infty} B$; and
    \item $t \models_{S_\infty} A \to B$ iff for any $t' \geq_T t$, if $t' \models_{S_\infty} A$, then there is a ${t'' \geq_T t'}$ such that $t'' \models_{S_\infty} B$.
  \end{enumerate}
\end{definition}
\noindent We may omit subscripts in general, if no confusion arises. We note that negation behaves intuitionistically: $t \models_{S_\infty} \neg A$ iff $t' \not \models_{S_\infty} A$ for all $t' \geq_T t$.

We note that this system deals with actual construction of terms as well as verification of sentences, unlike the abstract semantic system defined in section \ref{section: The strict finitistic Kripke semantics}. $t \models_{S_\infty} A \to B$ means that if $A$ is actually verified, then so will $B$. But for $A \to B$ to hold, the agent does not have to know this. If $A \in \Eq$, $t \models_{S_\infty} A$ means that the agent has verified $A$ by, and knows $A$ at step $t$. Otherwise it only reflects our judgement about the agent's verification (cf. section \ref{section: Methods}).

We define validity in $S_\infty$ with two additional notions, assertibility and prevalence. The latter two stand for practical verifiability (cf. section \ref{section: Methods}).
\begin{definition}[Validity, assertibility, prevalence]\label{definition: Validity, assertibility, prevalence}
  An $A \in \ClForm$ is \textit{valid in $S_\infty$} if $t \models A$ for all $t \in T$; and \textit{assertible in $S_\infty$} if $t \models A$ for some $t \in T$. An $x \in \NDE$ is \textit{prevalent in $S_\infty$} if, for any $t \in T$, there is a $t' \geq_T t$ such that $x \in M(t')$. An $A \in \ClForm$ is \textit{prevalent in $S_\infty$} if for any $t \in T$, there is a $t' \geq_T t$ such that $t' \models A$. We write $\models_{S_\infty}^V A$, $\models_{S_\infty}^A A$ and $\models_{S_\infty}^P A$ for that $A$ is valid, assertible and prevalent in $S_\infty$, respectively.
\end{definition}
\noindent Accordingly, $\not \models^V \overline{0} = \overline{0}$, since $t_0 \not \models \overline{0} = \overline{0}$. We are supposing that the ideal agent of $S_\infty$ does not know the equation at the beginning. We will soon introduce models of arithmetic (as elements of $\mathcal{S}_{\fin}$) with various initial knowledge. The notion of semantic consequence is defined via that of validity.
\begin{definition}[Semantic consequence]
  Let $\Gamma, \Delta \subseteq_{\fin} \ClForm$. $\Delta$ is a \textit{semantic consequence of $\Gamma$ in $S_\infty$} if, for any $t \in T$, if $t \models A$ holds for all $A \in \Gamma$, then $t \models B$ for some $B \in \Delta$. We write $\Gamma \models_{S_\infty}^V \Delta$ for this.
\end{definition}
\noindent We do not define this notion for infinite sets, since if $\Gamma$ e.g. contains infinitely many equations, then $\Gamma \models^V \Delta$ would hold vacuously.

Plainly $\models_{S_\infty}$ persists\footnote{
Wright had to posit it and called the `unproved lemma' \cite[p.170, p.172]{Wright1982}.
}.
$S_\infty$ has what we call the \textit{prevalence property}: assertibility implies prevalence.
\begin{proposition}[Prevalence property]\label{proposition: Prevalence property}
  (i) Every NDE is prevalent in $S_\infty$. (ii) $\models_{S_\infty}^A A$ implies $\models_{S_\infty}^P A$.
\end{proposition}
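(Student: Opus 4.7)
For part (i), the plan is a straightforward induction on the structure of the NDE $x$, exploiting the two persistence facts visible from the construction of $S_\infty$: the relation $\leq_T$ makes $M(\cdot)$ monotone (each direct successor only adds one item to $M$ or $E$), and the construction clauses guarantee that whenever the necessary ingredients are present at a node, a direct successor adding the desired new NDE exists. For the base case $x = \overline{0}$, persistence of $M$ gives $\overline{0} \in M(t)$ for every $t$. For $x = S(y)$, given any $t$ I apply the induction hypothesis to $y$ to obtain $t_1 \geq_T t$ with $y \in M(t_1)$, then either $S(y) \in M(t_1)$ already, or construction clause~(i) furnishes a direct successor $t_2$ with $S(y) \in M(t_2)$. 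The cases $x = y + z$ and $x = y \cdot z$ are analogous, applying the IH twice in succession (using persistence of $M$ to keep $y$ available while producing $z$).

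For part (ii), I would proceed by induction on the complexity of $A \in \ClForm$. For $A = \bot$, the statement is vacuous. For $A \in \Eq$ with $t \models A$, i.e.\ $A = (x=y) \in E(t)$, the construction rule forces $\sem{x} = \sem{y}$; then given any $s$, part~(i) together with persistence of $M$ provides $s_1 \geq_T s$ with $x, y \in M(s_1)$, and clause~(iii) either already has $x = y \in E(s_1)$ or yields a direct successor $s_2$ with $x = y \in E(s_2)$, so $s_2 \models A$. The cases $A = B \land C$ and $A = B \lor C$ are routine: for conjunction, apply the IH to $B$, then to $C$ starting from the node just produced, and invoke persistence of $\models$; for disjunction, pick whichever disjunct is forced at the assertibility witness and apply IH.

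The genuine work is the implication case $A = B \to C$, and I would handle it by a case split on assertibility of the antecedent. If $B$ is not assertible anywhere in $S_\infty$, then for every $t \in T$ and every $t' \geq_T t$ we have $t' \not\models B$, so $t \models B \to C$ vacuously; thus $B \to C$ is valid and hence prevalent. If instead $\models^A B$, the IH gives $\models^P B$. Pick any witness $t_0 \models B \to C$; by prevalence of $B$ there is $t_1 \geq_T t_0$ with $t_1 \models B$, and then the forcing condition for $\to$ delivers $t_2 \geq_T t_1$ with $t_2 \models C$. Hence $\models^A C$, and by IH $\models^P C$. Now for arbitrary $t$ and any $t' \geq_T t$ with $t' \models B$, prevalence of $C$ furnishes $t'' \geq_T t'$ with $t'' \models C$, so $t \models B \to C$. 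Therefore $B \to C$ is actually valid, which is stronger than prevalence and closes the induction.

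The main obstacle is this implication step: naive attempts to transfer forcing from one branch to another via a direct ``catching up'' lemma are awkward because $\to$ looks at the entire upward cone, whereas two witness nodes may be incomparable in the tree. The case split above sidesteps this entirely, using only the IH's prevalence conclusion together with the weak (``time-gap'') reading of strict finitistic implication. A minor point to verify along the way is that the weaker assertibility premise of the IH for $C$ is in fact supplied by the argument (it is — we extract $\models^A C$ from $\models^A B \to C$ and prevalence of $B$), so no circularity arises in the nested use of IH.
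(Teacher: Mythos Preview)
Your proposal is correct and follows exactly the approach the paper indicates: induction on the structure of the NDE for (i), and induction on the complexity of $A$ for (ii), with the atomic case of (ii) appealing to (i). The paper's own proof is only the one-line hint ``Both by induction. (ii) will use (i)'', so your write-up is simply a faithful unpacking of that hint, including the natural case split on assertibility of the antecedent in the implication step.
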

\begin{proof}
  Both by induction. (ii) will use (i).
\end{proof}
\noindent It follows that $t \models \neg A$ iff $s \not \models A$ for all $s \in T$. The right side is Wright's original condition for negation, and this justifies abbreviating $A \to \bot$ as $\neg A$ (sections \ref{section: Methods}, \ref{subsection: Characteristics (i): Strict finitistic implication}). We note that $\models^V \neg \neg A$ iff $\models^A A$: $\neg \neg A$ stands for practical verifiability, in contrast to $\neg A$ for practical unverifiability (section \ref{section: Methods}). We also have $t \models A \to B$ iff $t \models \neg A \lor \neg \neg B$.

We see that assertibility can be calculated just as classical truth-values, and validity is recursively characterisable.
\begin{proposition}\label{proposition: Assertibility and validity calculations}
  In the context of $S_\infty$,
  \begin{enumerate}
    \item (a) $\models^A A \land B$ iff $\models^A A$ and $\models^A B$. (b) $\models^A A \lor B$ iff $\models^A A$ or $\models^A B$. (c) $\models^A A \to B$ iff $\not \models^A A$ or $\models^A B$.
    \item (a) If $A$ is atomic, then $\not \models^V A$. (b) $\models^V A \land B$ iff $\models^V A $ and $\models^V B$. (c) $\models^V A \lor B$ iff $\models^V A$ or $\models^V B$. (d) $\models^V A \to B$ iff $\models^A A \to B$.
  \end{enumerate}
\end{proposition}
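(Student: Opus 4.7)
The plan is to apply three facts repeatedly: persistence of $\models_{S_\infty}$; the prevalence property (assertibility implies prevalence, both for NDEs and formulas); and the observation that every $t \in T$ satisfies $t_0 \leq_T t$. The first two let assertions be pushed arbitrarily far up the tree, while the third collapses a universal claim about all nodes to a single witness at the root.

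I would dispatch item (1) as follows. (b) is immediate from the disjunction clause in both directions, and the easy half of (a) similarly. For the non-trivial half of (a), given $s \models A$ and $u \models B$, I would invoke prevalence to make $A$ prevalent, pick $u' \geq_T u$ with $u' \models A$, and conclude $u' \models A \land B$ by persistence of $u \models B$. For the $\Leftarrow$ direction of (c), I would split into two sub-cases: if $\not\models^A A$ then the antecedent of $t \models A \to B$ is never satisfied above any $t$, so $A \to B$ holds vacuously everywhere; if $\models^A B$ then by prevalence of $B$, any $t' \geq_T t$ admits some $t'' \geq_T t'$ with $t'' \models B$, so again $A \to B$ holds at every $t$. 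Either way $A \to B$ is valid, hence assertible. For the $\Rightarrow$ direction of (c), I would argue contrapositively, assuming $\models^A A$ and $\not \models^A B$: at any $t$, prevalence of $A$ yields $t' \geq_T t$ with $t' \models A$, yet $\not\models^A B$ forbids any $t'' \geq_T t'$ with $t'' \models B$, so $t \not\models A \to B$ for every $t$.

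Item (2) is then largely corollarial. (a) holds because $\bot$ is never forced and $E(t_0)=\emptyset$, so $t_0$ forces no atom. (b) reads off the conjunction clause. For (c), $\models^V A \lor B$ forces $t_0 \models A$ or $t_0 \models B$, and since every node is $\geq_T t_0$, persistence promotes one of these to a validity. For (d), the $\Rightarrow$ direction is trivial, and for $\Leftarrow$ I would invoke (1c): the two sub-cases in its $\Leftarrow$ proof have in fact already established $t \models A \to B$ at every $t$ rather than merely at one, so $\models^V A \to B$ follows at no extra cost.

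The main obstacle is the $\Rightarrow$ direction of (1c): it is the unique place where the prevalence property must be used in tandem with the global absence of any witness for $B$, and where the nested quantification of the implication clause is exploited delicately. Once (1c) is in hand, (2d) becomes almost immediate (its proof is really packaged inside (1c)'s $\Leftarrow$ direction), and the remaining clauses are routine unpackings from the actual verification conditions using only persistence and the root.
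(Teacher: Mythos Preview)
Your proposal is correct and follows essentially the same approach as the paper: the paper's proof reads in its entirety ``By prevalence property. For (ii, a-c), look at $t_0$,'' and your argument is a faithful, detailed unpacking of precisely that strategy---using prevalence to merge witnesses and to handle implication, and using the root to collapse validity to a single node.
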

\begin{proof}
  By prevalence property. For (ii, a-c), look at $t_0$.
\end{proof}
\noindent Therefore $\models^V A \to B$ iff $A \to B$ is classically valid; and $\neg A \lor \neg \neg A$, ${\neg \neg A \to A}$ and ${((A \to B) \to A) \to A}$ are valid in $S_\infty$. A general relation with \textbf{CPC} will be given as corollary \ref{corollary: Assertibility is CPC} in terms of the semantics defined in section \ref{section: The strict finitistic Kripke semantics}.

In the introduction (section \ref{section: Methods}), we saw three conditions of practical constructibility. There is a fourth: every statement in strict finitistic reasoning must be `actually weakly decidable' in the sense that it is either assertible or not assertible \cite[p.133]{Wright1982}\footnote{This is rather about practical verifiability. But the first three can also be stated in terms of it: translate (i) into `it is practically verifiable that $\overline{0}$ is constructed', etc.
}.
We suggest that $\models^V \neg A \lor \neg \neg A$ well conforms to this requirement, as it is equivalent to that $\not \models^A A$ or $\models^A A$.

Also, for any assertible $A$ and non-assertible $B$, we have $t_0 \models {\overline{0} = \overline{0} \to A}$ and $t_0 \models^V \neg B$, even if they contain huge NDEs. As noted (section \ref{section: Methods}), these do not reflect the agent's knowledge at $t_0$, but display our judgements about their verification which one must commit to, given the definition of $S_{\infty}$.

$S_\infty$ lacks two principles, the law of excluded middle and Modus Ponens.
\begin{proposition}\label{proposition: Failure of LEM and MP}
  (i) $\not \models_{S_\infty}^V A \lor \neg A$ for some $A$. (ii) For some $A$ and $B$, $\models_{S_\infty}^V B \to A$ and $\models_{S_\infty}^V B$ do not imply $\models_{S_\infty}^V A$.
\end{proposition}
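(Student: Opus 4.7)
The plan is to exhibit concrete counterexamples at the root $t_0$ of $S_\infty$, using the single atom $p := \overline{0} = \overline{0}$. The crucial structural fact is that $E(t_0) = \emptyset$ by construction, yet clause (iii) of the $S_\infty$-construction guarantees a direct successor $t_1$ of $t_0$ with $p \in E(t_1)$. For (i), I set $A := p$. Directly $t_0 \not\models A$, while $t_1 \models A$ forces $t_0 \not\models \neg A$ via the reformulation of negation noted just after proposition \ref{proposition: Prevalence property}, namely $t \models \neg A$ iff $s \not\models A$ for every $s \in T$. Hence $t_0 \not\models A \lor \neg A$, which yields $\not\models^V_{S_\infty} A \lor \neg A$.

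For (ii) I plan to reuse $A := p$ and to take $B := \neg \bot$, i.e., $\bot \to \bot$. Since definition \ref{Actual verification conditions} attaches no forcing clause to $\bot$, $\bot$ is never forced, so $B$ is vacuously forced at every node and $\models^V B$ holds. For $\models^V B \to A$, proposition \ref{proposition: Assertibility and validity calculations}(ii, d) reduces it to $\models^A B \to A$, and (i, c) further reduces that to $\not\models^A B$ or $\models^A A$; the second disjunct is witnessed by the $t_1$ from part (i). Thus $\models^V B$ and $\models^V B \to A$ both hold, while $\not\models^V A$ was already established in (i) — which is precisely the failure of Modus Ponens.

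There is no genuine technical obstacle; both parts are direct unfoldings of definition \ref{Actual verification conditions} together with proposition \ref{proposition: Assertibility and validity calculations}. The conceptual observation worth flagging is that proposition \ref{proposition: Assertibility and validity calculations}(ii, d) forces validity of implications to be computed classically through assertibility, while validity of atoms remains sensitive to the informational poverty of $t_0$. This mismatch between the classical behaviour of implications at the level of validity and the stubbornly node-local behaviour of atoms is exactly what breaks both the law of excluded middle in (i) and Modus Ponens in (ii).
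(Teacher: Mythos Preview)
Your argument is correct and follows essentially the same approach as the paper: for (i) you both take $A = \overline{0} = \overline{0}$ and observe $t_0 \not\models A \lor \neg A$; for (ii) you choose $B = \neg\bot$ while the paper chooses $B = (\overline{0} = \overline{0}) \to (\overline{0} = \overline{0})$, but both are valid implicational formulas and the reasoning via proposition~\ref{proposition: Assertibility and validity calculations} is identical in spirit.
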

\begin{proof}
  Let $A$ be $\overline{0} = \overline{0}$, and $B$ $(\overline{0} = \overline{0}) \to (\overline{0} = \overline{0})$, and look at $t_0$.
\end{proof}
\noindent (i) is the case even for concrete sentences, unlike in intuitionistic reasoning. Since the nodes represent actual construction steps, there naturally are unverified sentences that will be verified. Similarly, Modus Ponens fails since the conclusion implies $A$ is verified at the beginning. We note that $\models_{S_\infty}^V B \to A$ and $\models_{S_\infty}^V B$, however, imply $\models_{S_\infty}^A A$ for all $A$ and $B$.

These principles in fact hold for the formulas with `stability'. We say that $A$ is \textit{stable in $S_\infty$} if $\neg \neg A \models_{S_\infty}^V A$; and \textit{unstable} otherwise. Namely, assertibility implies validity for the stable formulas. One can easily see $\neg \neg A \models^V A$ iff $\models^V A \lor \neg A$; and, if $A$ is stable, then $\models^V B \to A$ and $\models^V B$ imply $\models^V A$ for all $B$. Although we have not succeeded in recursively characterising the stable formulas, we can present a class with stability. Define
\begin{itemize}
  \item $\ClST ::= \bot \mid \ClForm \to \ClForm \mid \ClST \land \ClST \mid \ClST \lor \ClST$.
\end{itemize}
\begin{proposition}\label{proposition: Stability of ClST}
  Every ClST formula is stable in $S_\infty$.
\end{proposition}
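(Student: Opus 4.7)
The plan is to induct on the grammar of $\ClST$, using Proposition \ref{proposition: Assertibility and validity calculations} to dispatch each constructor. The key preparatory observation is that stability $\neg\neg A \models^V A$ is equivalent to the implication \emph{if $\models^A A$ then $\models^V A$}. Indeed, $\neg\neg A$ is the implication $\neg A \to \bot$, so by Proposition \ref{proposition: Assertibility and validity calculations}(ii, d) its validity and its assertibility coincide, which means $\neg\neg A$ is forced either at every node or at no node. Combined with the remark just after Proposition \ref{proposition: Prevalence property} that $\models^V \neg\neg A$ iff $\models^A A$, this shows that $t \models \neg\neg A$ for some (equivalently all) $t$ iff $\models^A A$. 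Hence $\neg\neg A \models^V A$ collapses to: whenever $\models^A A$, also $\models^V A$.

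With this reformulation the two base cases are immediate. When $A = \bot$, assertibility of $A$ fails outright, and the conclusion holds vacuously. When $A = B \to C$ for arbitrary $B, C \in \ClForm$, Proposition \ref{proposition: Assertibility and validity calculations}(ii, d) already states $\models^V A$ iff $\models^A A$, which is exactly the reformulated stability.

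For the inductive step, suppose $A = A_1 \land A_2$ with $A_1, A_2 \in \ClST$ and $\models^A A$. Then Proposition \ref{proposition: Assertibility and validity calculations}(i, a) gives $\models^A A_1$ and $\models^A A_2$; the induction hypothesis upgrades these to $\models^V A_1$ and $\models^V A_2$; and clause (ii, b) then delivers $\models^V A$. The disjunction case $A = A_1 \lor A_2$ is parallel, using (i, b) to split assertibility of $A$ into assertibility of one of the disjuncts and (ii, c) to reassemble validity. I do not anticipate a substantive obstacle: once the reformulation of the first paragraph is in place, each clause of Proposition \ref{proposition: Assertibility and validity calculations} matches the corresponding $\ClST$-constructor and the induction proceeds by direct substitution.
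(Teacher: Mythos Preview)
Your proof is correct and follows the same approach as the paper, which simply writes ``Induction.'' Your reformulation of stability as ``$\models^A A$ implies $\models^V A$'' is exactly the right lemma to make the induction go through cleanly via Proposition~\ref{proposition: Assertibility and validity calculations}, and each case is handled correctly.
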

\begin{proof}
  Induction.
\end{proof}

Now we define our models of arithmetic. A tuple $S = \langle T', \leq_{T'}, M', E' \rangle$ is a \textit{model of strict finitistic arithmetic} if (i) it is a finite initial part of a submodel of $S_\infty$ in the sense that (a) $T' \subseteq_{\fin} T$, (b) $\leq_{T'}$, $M'$ and $E'$ are the restrictions of $\leq_T$, $M$ and $E$, respectively, to $T'$, (c) $\langle T', \leq_{T'} \rangle$ is a tree with its root $t'_0$ and (d) each branch is $\{ t \in T \mid t'_0 \leq_T t <_T s \}$ for some $s \in T$, and
\begin{itemize}
  \item [(ii)] for any $p \in \Eq$, if $p \in E'(t)$ for some $t \in T'$, then for any $s \in T'$, there is an $s' \geq_{T'} s$ such that $p \in E'(s')$ (the atomic prevalence condition).
\end{itemize}
Let $\mathcal{S}_{\fin}$ denote the class of all models of arithmetic, and the class $\mathcal{S}$ consist in $\mathcal{S}_{\fin}$ and $S_\infty$.

We carry over the forcing conditions and the semantic notions. For each $S \in \mathcal{S}_{\fin}$, we define a forcing relation between the nodes of $S$ and $\ClForm$ in the same way, and write $\models_S$ instead of $\models_{S_\infty}$. Validity, assertibility, prevalence and semantic consequence in each $S \in \mathcal{S}_{\fin}$ are defined likewise, except that we consider only the nodes of $S$. We will write $\models_S^V A$ etc., respectively. By $\models_{\mathcal{S'}}^V$ etc. for a class $\mathcal{S}' \subseteq \mathcal{S}$, we mean the relations $\models_{S'}^V$ etc. hold, respectively, for all $S' \in \mathcal{S}'$. $A \in \ClForm$ is \textit{stable in $S \in \mathcal{S}$} if $\neg \neg A \models_S^V A$.

Each $S \in \mathcal{S}_{\fin}$ inherits its properties from $S_\infty$. Plainly $\models_S$ persists. The atomic prevalence is extended to all complex formulas.
\begin{proposition}[Full prevalence property]\label{proposition: Full prevalence property}
  For all $S \in \mathcal{S}_{\fin}$ and $A \in \ClForm$, $\models_S^A A$ implies $\models_S^P A$.
\end{proposition}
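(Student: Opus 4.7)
The plan is to prove the proposition by induction on the complexity of $A \in \ClForm$. The three tools available are: the persistence of $\models_S$ (noted earlier in the excerpt), the atomic prevalence condition (ii) built into the definition of $\mathcal{S}_{\fin}$, and, for the implication case, a case split on the assertibility of the antecedent.

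For the base cases, $\bot$ is vacuously prevalent because it is never forced, so never assertible; and for $A \in \Eq$, if $t \models_S A$ then $A \in E'(t)$, and clause (ii) in the definition of $\mathcal{S}_{\fin}$ is exactly the statement that $A$ is prevalent. For $A = B \land C$, assertibility of $A$ yields assertibility of both conjuncts, and applying the induction hypothesis twice gives, for any $s \in T'$, first some $s_1 \geq_{T'} s$ with $s_1 \models B$, then some $s' \geq_{T'} s_1$ with $s' \models C$; persistence then gives $s' \models B$ as well, so $s' \models B \land C$. For $A = B \lor C$, assertibility of $A$ entails assertibility of at least one disjunct, and the induction hypothesis applied to that disjunct delivers prevalence of $A$ immediately.

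The implication case $A = B \to C$ is the one that deserves care. Suppose $t \models B \to C$ for some $t \in T'$. I would split on whether $B$ is assertible in $S$. If $B$ is not assertible, then for every $s \in T'$ and every $t' \geq_{T'} s$ we have $t' \not\models B$, so $s \models B \to C$ holds vacuously at every node and $B \to C$ is trivially prevalent. Otherwise $B$ is assertible, and the induction hypothesis gives that $B$ is prevalent; applying prevalence of $B$ at $t$ yields some $t' \geq_{T'} t$ with $t' \models B$, and then the forcing condition for implication at $t$ produces $t'' \geq_{T'} t'$ with $t'' \models C$, so $C$ is assertible too. Applying the induction hypothesis to $C$, we obtain prevalence of $C$. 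For arbitrary $s \in T'$, choose $s' \geq_{T'} s$ with $s' \models C$; by persistence $t' \models C$ for every $t' \geq_{T'} s'$, so taking $t'' = t'$ witnesses $s' \models B \to C$.

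The only real obstacle is the implication clause, where a naive attempt to transport the witness $t$ of assertibility of $B \to C$ to an arbitrary $s$ fails because $t$ and $s$ need not be comparable in the tree. The case split above bypasses this: either $B$ is globally absent in the descendants of every node (so the implication is vacuously true throughout $T'$), or assertibility of $B \to C$ together with the prevalence of $B$ (given by the induction hypothesis) forces $C$ itself to be assertible, so the problem reduces to prevalence of $C$, which the induction hypothesis supplies.
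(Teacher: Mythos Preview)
Your proof is correct and follows exactly the approach the paper indicates (induction on the complexity of $A$), with all the details properly worked out; the paper's own proof is simply the one word ``Induction.'' Your handling of the implication case via a case split on the assertibility of the antecedent is the natural way to carry out that induction step and is implicitly what the paper intends.
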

\begin{proof}
  Induction.
\end{proof}
\noindent The other properties follow from the full prevalence in their respective forms. We note that proposition \ref{proposition: Assertibility and validity calculations} (ii, a) now is that for all atomic $A$, $\models_S^V A$ iff $t'_0 \models_S A$. Also, proposition \ref{proposition: Failure of LEM and MP} takes the following form.
\begin{proposition}
  (i) $\not \models_S^V A \lor \neg A$ for some $S \in \mathcal{S}_{\fin}$ and $A$. (ii) For some $S \in \mathcal{S}_{\fin}$ and $A$ and $B$, $\models_S^V B \to A$ and $\models_S^V B$ do not imply $\models_S^V A$.
\end{proposition}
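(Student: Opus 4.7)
The plan is to recycle the counterexamples used for $S_\infty$ in proposition \ref{proposition: Failure of LEM and MP} — namely $A := \overline{0} = \overline{0}$ and $B := (\overline{0} = \overline{0}) \to (\overline{0} = \overline{0})$ — but to realise them inside a concrete finite $S \in \mathcal{S}_{\fin}$. Take $t_0$ to be the root of $S_\infty$ and let $t_1$ be the direct successor of $t_0$ supplied by clause (iii) of the construction of $S_\infty$ with $x = y = \overline{0}$, so that $M(t_1) = \{\overline{0}\}$ and $E(t_1) = \{\overline{0} = \overline{0}\}$. Set $T' := \{t_0, t_1\}$ and take $\leq_{T'}$, $M'$, $E'$ to be the restrictions of $\leq_T$, $M$, $E$ to $T'$; let $S$ be the resulting tuple.

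The first step is to verify that $S \in \mathcal{S}_{\fin}$. Conditions (a)–(c) are immediate, and for (d) the unique branch $\{t_0, t_1\}$ coincides with $\{t \in T \mid t_0 \leq_T t <_T s\}$ for any direct $\leq_T$-successor $s$ of $t_1$ in $S_\infty$ (which exists). The atomic prevalence condition is trivial: the only equation occurring in $E'$ is $\overline{0} = \overline{0}$, and $t_1 \geq_{T'} t_0, t_1$ forces it.

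For (i), evaluate at $t_0$. We have $t_0 \not\models_S A$ because $E'(t_0) = \emptyset$, and $t_0 \not\models_S \neg A$ because $t_1 \geq_{T'} t_0$ with $t_1 \models_S A$. Hence $t_0 \not\models_S A \lor \neg A$, so $\not \models_S^V A \lor \neg A$. For (ii), $\models_S^V B$ is immediate since at any node the implication $A \to A$ is witnessed by the antecedent node itself. Next, $\models_S^V B \to A$ holds because $B$ is forced everywhere, so the condition reduces to: from every node some $\leq_{T'}$-successor forces $A$, which is true since $t_1$ is reachable from both $t_0$ and $t_1$ and forces $A$. Yet $\not \models_S^V A$ since $t_0 \not\models_S A$, yielding the failure of Modus Ponens for validity.

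There is no real obstacle here; the only thing that requires a moment's care is showing that a two-node truncation of $S_\infty$ does live in $\mathcal{S}_{\fin}$, i.e.\ that clause (d) of the definition of a model of arithmetic and the atomic prevalence condition are both satisfied. Both are trivially checked in the case of this minimal configuration, which is why the proof in the paper may simply read \textit{``Let $A$ be $\overline{0} = \overline{0}$ and $B$ be $(\overline{0} = \overline{0}) \to (\overline{0} = \overline{0})$; take the two-node initial part of $S_\infty$ with $E(t_1) = \{\overline{0} = \overline{0}\}$, and look at $t_0$.''}
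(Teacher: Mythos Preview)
Your proof is correct and essentially takes the same route as the paper: exhibit a finite model in which $\overline{0}=\overline{0}$ is assertible but not valid, and use $B := A \to A$. The only difference is one of packaging: the paper writes the single line ``Take any $S$ with an $A$ unstable in $S$. Let $B$ be $A \to A$'', relying on the earlier observation that $\neg\neg A \models_S^V A$ iff $\models_S^V A \lor \neg A$ and on the full prevalence property to obtain $\models_S^V (A \to A) \to A$, whereas you explicitly build the two-node initial segment $\{t_0, t_1\}$ and verify the $\mathcal{S}_{\fin}$ conditions and both forcing claims by hand. Your version is more self-contained; the paper's is terser because it cashes in the stability machinery already set up. Your closing guess at the paper's wording is close in spirit but slightly more concrete than what actually appears.
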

\begin{proof}
  Take any $S$ with an $A$ unstable in $S$. Let $B$ be $A \to A$.
\end{proof}

Although $S_\infty$ is not officially a model of arithmetic, adding it to $\mathcal{S}_{\fin}$ does not change the totality of the valid formulas. Define for each $t \in T$, $T' := \{ t \mid t_0 \leq_{T} t' \leq_{T} t \}$. Then $T'$ defines a model of arithmetic. We call it the \textit{contraction model $S_\infty \vert t$ of $S_\infty$ with respect to $t$}. Given an $A \in \ClForm$, let $\Eq(A) \subseteq \Eq$ be the set of the equations occurring in $A$. Since $\Eq(A)$ is finite, for each $t \in T$, there is a \textit{contraction node $t' \geq_T t$ with respect to $A$} such that for all $p \in \Eq(A)$, if $\models_{S_\infty}^A p$, then $t' \models_{S_\infty} p$. Let $T_A$ be the set of those nodes.

\begin{proposition}\label{proposition: Model contraction in S}
  (i) For any $t \in T_A$, $\models_{S_\infty}^A A$ iff $t \models_{S_\infty \vert t} A$. (ii) For any $t \in T_A$, $\models_{S_\infty}^V A$ iff $t_0 \models_{S_\infty \vert t} A$. (iii) $\models_{\mathcal{S}}^V A$ iff $\models_{\mathcal{S}_{\fin}}^V A$.
\end{proposition}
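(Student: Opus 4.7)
My approach is to prove the three claims in order, with (i) and (ii) by simultaneous induction on the formula complexity of $A$, and then (iii) as a direct consequence of (ii) once nonemptiness of $T_A$ is argued.

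For part (i), I would induct on $A$. The atomic case follows immediately: if $A \in \Eq$, then $\models^A_{S_\infty} A$ means $A \in E(s)$ for some $s$, and by the definition of $T_A$ combined with the fact that $E'$ is the restriction of $E$, we get $A \in E'(t)$ iff $A \in E(t)$, while the condition on $t \in T_A$ forces the assertible atomic subformulas of $A$ to hold at $t$. The cases $\land$ and $\lor$ follow by routine use of the IH together with proposition~\ref{proposition: Assertibility and validity calculations}(i,a-b). The only non-routine case is $A \to B$: here I would use proposition~\ref{proposition: Assertibility and validity calculations}(i,c) to rephrase $\models^A_{S_\infty} A \to B$ as ``$\not\models^A_{S_\infty} A$ or $\models^A_{S_\infty} B$''. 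Since $t$ is the unique maximum of the chain $T'$, the condition $t \models_{S_\infty \vert t} A \to B$ simplifies to ``$t \not\models A$ or $t \models B$'', and the IH of (i) on $A$ and $B$ (noting $t \in T_{A\to B}$ implies $t \in T_A$ and $t \in T_B$) gives the equivalence.

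For part (ii), I would again induct on $A$. The base, conjunction, and disjunction cases parallel those of (i), using proposition~\ref{proposition: Assertibility and validity calculations}(ii, a-c) on the $S_\infty$ side and persistence at the root $t_0$ on the $S_\infty \vert t$ side. For the implication case the cleanest route is to first verify that $S_\infty \vert t$ lies in $\mathcal{S}_\fin$ (since $T'$ is a chain, atomic prevalence is trivial), so proposition~\ref{proposition: Full prevalence property} and hence the analog of proposition~\ref{proposition: Assertibility and validity calculations}(ii,d) apply to it. Then $t_0 \models_{S_\infty \vert t} A \to B$ is equivalent to $\models^V_{S_\infty \vert t} A \to B$ (root + persistence), which is equivalent to $\not\models^A_{S_\infty \vert t} A$ or $\models^A_{S_\infty \vert t} B$. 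By part (i) applied to $A$ and to $B$, each $\models^A_{S_\infty \vert t}$ matches $\models^A_{S_\infty}$; and by proposition~\ref{proposition: Assertibility and validity calculations}(i,c)+(ii,d) the resulting disjunction is equivalent to $\models^V_{S_\infty} A \to B$.

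For part (iii), the forward direction is immediate because $\mathcal{S}_\fin \subseteq \mathcal{S}$. For the converse, assume $\models^V_{\mathcal{S}_\fin} A$. To show $\models^V_{S_\infty} A$, it suffices by (ii) to produce any single $t \in T_A$, since then $t_0 \models_{S_\infty \vert t} A$ follows from validity of $A$ in $S_\infty \vert t \in \mathcal{S}_\fin$. Nonemptiness of $T_A$ follows from proposition~\ref{proposition: Prevalence property}: enumerate the finitely many assertible $p \in \Eq(A)$ as $p_1, \ldots, p_n$, pick $s_1 \geq_T t_0$ with $s_1 \models p_1$ by prevalence, then $s_2 \geq_T s_1$ with $s_2 \models p_2$, and so on; by persistence $s_n$ forces every $p_i$, so $s_n \in T_A$.

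The main obstacle I anticipate is the implication case of (ii), because unlike the situation at $t$, the forcing of $A \to B$ at $t_0$ quantifies over every node of the chain and thus does not reduce to a single test at the max; the resolution is to observe that $S_\infty \vert t$ itself is a model of arithmetic so the recursive characterization of validity (proposition~\ref{proposition: Assertibility and validity calculations}(ii,d)) transports to it, allowing (i) to bridge assertibility in $S_\infty$ and in $S_\infty \vert t$.
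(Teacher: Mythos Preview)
Your proposal is correct and follows essentially the same approach as the paper: induction on $A$ for (i) and (ii) using proposition~\ref{proposition: Assertibility and validity calculations} together with the inclusion $T_{B \circ C} \subseteq T_B \cap T_C$, with (ii) appealing to (i) in the implication case, and (iii) following from (ii). Your additional observations that $S_\infty \vert t \in \mathcal{S}_{\fin}$ (so the analogue of proposition~\ref{proposition: Assertibility and validity calculations} applies there) and that $T_A \neq \emptyset$ by prevalence are exactly the details the paper leaves implicit.
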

\begin{proof}
  (i-ii) Both by induction. Use proposition \ref{proposition: Assertibility and validity calculations} and that ${T_{B \land C}, T_{B \lor C}, }$ ${ T_{B \to C} \subseteq T_B \cap T_C}$. (ii) will use (i). (iii) Immediate from (ii).
\end{proof}

\subsection{The strict finitistic Kripke semantics}\label{section: The strict finitistic Kripke semantics}
In this section, we define the `strict finitistic semantics' to schematically capture validity in $\mathcal{S}$. It is a Kripke-style semantics which resembles the intuitionistic semantics. An \textit{intuitionistic frame} $\langle K, \leq \rangle$ is a nonempty set $K$ partially ordered by $\leq$. Throughout this article, we only consider rooted tree-like intuitionistic frames. We denote a root by $r$. An \textit{intuitionistic model} $\langle K, \leq, v \rangle$ is an intuitionistic frame equipped with a valuation function $v: \Var \to \mathcal{P}(K)$ such that $v(p)$ is closed upwards. An intuitionistic model $\langle K, \leq, v \rangle$ is a \textit{strict finitistic model} if
\begin{enumerate}
  \item $\langle K, \leq \rangle$ is finitely branching,
  \item each branch is of at most countable length,
  \item for each $k \in K$, $\{ p \in \Var \mid k \in v(p) \}$ is finite (the finite verification condition), and
  \item for any $p$, if $k \in v(p)$ for some $k$, then for any $l \in K$, there is an $l' \geq l$ such that $l' \in v(p)$ (the atomic prevalence condition).
\end{enumerate}
Let $\mathcal{W}$ denote the class of all strict finitistic models, which contains infinite as well as finite ones.

Just as the case of $\mathcal{S}_{\fin}$, we carry over the forcing conditions and the semantic notions. We write $\models_W^V A$ etc. for any $W \in \mathcal{W}$. However, (i) the forcing relation is now between the nodes of a model $W$ and the open formulas in $\mathcal{L}_{\SF}$. Therefore this semantics does not deal with construction, but only verification schematically, just as the intuitionistic semantics. (ii) The basis of the definition of the forcing conditions is now $k \models_W p$ iff $k \in v(p)$. (iii) Instead of $\ClST$, we will use the class $\ST \subseteq \Fm$ defined by
\begin{itemize}
  \item $\ST ::= \bot \mid \Fm \to \Fm \mid \ST \land \ST \mid \ST \lor \ST$.
\end{itemize}
It is easy to see that the semantic properties so far are carried over in their respective forms. Particularly, for every $W \in \mathcal{W}$, $\models_W$ persists; and the full prevalence property (proposition \ref{proposition: Full prevalence property}) holds (i.e. $\models_W^A A$ implies $\models_W^P A$).

$\mathcal{W}$ corresponds to $\mathcal{S}$ via substitutions in two senses. We call any mapping $\sigma: \Var \to \Eq$ a \textit{substitution}; and denote the set of the variables occurring in $A \in \Fm$ by $\Var(A)$. $\sigma(A)$ denotes the result of simultaneously substituting all $p \in \Var(A)$ with $\sigma(p)$. If $\Gamma \subseteq \Fm$, $\sigma(\Gamma)$ denotes $\{ \sigma(A) \mid A \in \Gamma \}$. We say $\sigma$ is \textit{faithful to a given $W \in \mathcal{W}$} if for all $p$, $\models_W^A p$ iff $\models_{S_\infty}^A \sigma(p)$.

Firstly, each $W = \langle K, \leq, v \rangle \in \mathcal{W}$ is an abstraction of a part of $S_\infty$. One can confirm that for any substitution $\sigma$ faithful to $W$, there are a $T' \subseteq T$ and a bijection $f: K \to T'$ such that
\begin{enumerate}
  \item for all $k_1, k_2 \in K$, $k_1 \leq k_2$ iff $f(k_1) \leq_{T'} f(k_2)$; and
  \item for all $p$ and $k$, $k \models_W p$ iff $f(k) \models_{S_\infty} \sigma(p)$,
\end{enumerate}
\noindent where $\leq_{T'}$ is the restriction of $\leq_T$ to $T'$. Note that we do not require that $T'$ is closed upwards with respect to $\leq_T$; or that, on both sides of (i), there is no node between the two. We call such a bijection $f_\sigma$. Correspondence (ii) above can be generalised to all formulas.
\begin{proposition}\label{proposition: W into C}
  For any faithful $\sigma$ and $A \in \Fm$, (i) $\models_W^A A$ iff $\models_{S_\infty}^A \sigma(A)$, and (ii) $k \models_W A$ iff $f_\sigma (k) \models_{S_\infty} \sigma(A)$.
\end{proposition}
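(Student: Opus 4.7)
My plan is to prove (i) and (ii) separately, with (i) handled by a routine induction and (ii) reduced to (i) via a small observation about how implication is forced under full prevalence.

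For (i), I would induct on the structure of $A$. The base cases are immediate: $\bot$ is never assertible in either structure, and for a variable $p$ the claim is exactly the faithfulness of $\sigma$. At the connectives, the point is that by the full prevalence property (Proposition \ref{proposition: Full prevalence property}) both $W$ and $S_\infty$ admit the classical assertibility calculations of Proposition \ref{proposition: Assertibility and validity calculations}(i); in particular $\models^A B \to C$ iff $\not\models^A B$ or $\models^A C$. So each connective case of (i) reduces to a Boolean combination of the inductive hypothesis applied to the immediate subformulas, with no reference to the order structure at all.

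For (ii), I would again induct on $A$. The base case $A = p$ is condition (ii) imposed on $f_\sigma$, and the cases $A = B \land C$ and $A = B \lor C$ follow directly from the inductive hypothesis because forcing is evaluated pointwise at $k$. The delicate case is implication, since the $S_\infty$-side forcing condition at $f_\sigma(k)$ quantifies over $t' \geq_T f_\sigma(k)$ in all of $T$, which may include nodes outside the image $T' = f_\sigma(K)$; so the outer quantifier cannot simply be transferred along $f_\sigma$. My plan is to first establish a preliminary lemma: under full prevalence, for any node $t$ of a model one has
\[
t \models B \to C \ \text{ iff } \ \bigl( \models^A B \text{ implies } \models^A C \bigr),
\]
so that forcing of an implication at $t$ collapses to a condition on global assertibility alone. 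The forward direction uses prevalence of $B$ to produce a $t' \geq t$ with $t' \models B$ whenever $B$ is assertible and then applies the forcing clause; the reverse direction uses prevalence of $C$ to supply a witness above any given $t' \geq t$ with $t' \models B$. Applying this lemma in $W$ at $k$ and in $S_\infty$ at $f_\sigma(k)$, and invoking (i) for the subformulas $B$ and $C$, yields $k \models_W B \to C$ iff $f_\sigma(k) \models_{S_\infty} \sigma(B) \to \sigma(C)$.

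The main obstacle is precisely the implication case of (ii), because a priori the relevant successor nodes on the $S_\infty$-side live outside the bijection's image; the preliminary prevalence lemma is designed exactly to eliminate that difficulty by making the local truth of an implication depend only on global assertibility, which is where (i) then plugs in cleanly.
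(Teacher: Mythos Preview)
Your proposal is correct and follows essentially the same approach as the paper: both (i) and (ii) are proved by induction on $A$, with the implication case of (ii) handled by appealing to (i). Your preliminary observation that, under full prevalence, $t \models B \to C$ collapses to the global assertibility condition $\not\models^A B$ or $\models^A C$ is exactly the content of the remark following Proposition~\ref{proposition: Prevalence property} (and its analogue for $W$), and is precisely how (i) enters into the argument for (ii).
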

\begin{proof}
  Both by induction. (ii) will use (i).
\end{proof}

Secondly, the entire class $\mathcal{W}$ captures the validity in $\mathcal{S}$. For each $S = \langle T_0, \leq_{T_0}, M_{\vert T_0}, E_{\vert T_0} \rangle \in \mathcal{S}$ and $\sigma: \Var \to \Eq$, we define the \textit{copy model $W_S \in \mathcal{W}$ of $S$ under $\sigma$} to be $\langle T_0, \leq_{T_0}, v_S \rangle$ where $t \in v_S (p)$ iff $\sigma (p) \in E_{\vert T_0}(t)$.
\begin{proposition}\label{proposition: A copy model}
  For all $t \in T_0$ and $A \in \Fm$, $t \models_{W_S} A$ iff $t \models_S \sigma (A)$.
\end{proposition}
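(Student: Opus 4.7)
The plan is a straightforward structural induction on the complexity of $A \in \Fm$, exploiting the fact that the copy model $W_S$ is built on exactly the same frame $\langle T_0, \leq_{T_0}\rangle$ as $S$, so that future-node quantifications coincide on the two sides.

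For the base cases, when $A = \bot$ both sides are false by the respective forcing clauses (and $\sigma(\bot) = \bot$). When $A = p \in \Var$, the equivalence is immediate from the definitions: $t \models_{W_S} p$ iff $t \in v_S(p)$ iff $\sigma(p) \in E_{\vert T_0}(t)$ iff $t \models_S \sigma(p)$, using that $\sigma(p) \in \Eq$ so the atomic forcing clause of $S$ applies.

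For the inductive step, I would use that $\sigma$ commutes with the connectives, so $\sigma(B \land C) = \sigma(B) \land \sigma(C)$, and similarly for $\lor$ and $\to$. The cases for $\land$ and $\lor$ are then immediate from the inductive hypothesis and the matching clauses in Definition \ref{Actual verification conditions} and its analogue for $W_S$. For $A = B \to C$, the forcing clause on each side quantifies over nodes $t' \geq t$ and then $t'' \geq t'$; since $W_S$ and $S$ share the same carrier $T_0$ and order $\leq_{T_0}$, these ranges of quantification are literally the same sets. Applying the inductive hypothesis to $B$ at each such $t'$ and to $C$ at each such $t''$ yields the equivalence.

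There is no substantial obstacle here; the only point worth being careful about is that the clause for implication must be interpreted with respect to the same frame on both sides, which holds by construction of the copy model, so no structural manipulation (unlike in Proposition \ref{proposition: W into C}, which uses the bijection $f_\sigma$) is required.
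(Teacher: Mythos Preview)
Your proposal is correct and is exactly the approach the paper takes: the paper's proof consists of the single word ``Induction.'' Your write-up simply spells out that induction, with the crucial observation (that $W_S$ and $S$ share the frame $\langle T_0,\leq_{T_0}\rangle$, so the node quantifiers in the $\to$ clause range over the same set) made explicit.
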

\begin{proof}
  Induction.
\end{proof}

\begin{proposition}
  For any $\Gamma, \Delta \subseteq_{\fin} \Fm$, $\Gamma \models_{\mathcal{W}}^V \Delta$ iff $\sigma (\Gamma) \models_{\mathcal{S}}^V \sigma (\Delta)$ for all $\sigma: \Var \to \Eq$.
\end{proposition}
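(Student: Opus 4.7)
The plan is to handle the two directions separately, by contraposition, using the copy-model construction of Proposition \ref{proposition: A copy model} for $\Rightarrow$ and the faithful-embedding of Proposition \ref{proposition: W into C} (together with $S_\infty \in \mathcal{S}$) for $\Leftarrow$.

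For $\Rightarrow$, suppose that some $\sigma: \Var \to \Eq$ makes $\sigma(\Gamma) \not\models_{\mathcal{S}}^V \sigma(\Delta)$, witnessed by some $S \in \mathcal{S}$ and a node $t$ of $S$ forcing every $\sigma(A)$ with $A \in \Gamma$ but no $\sigma(B)$ with $B \in \Delta$. The natural move is to pass to the copy model $W_S$ of $S$ under $\sigma$ and apply Proposition \ref{proposition: A copy model}. The main obstacle, and the point where I expect the actual argument will need care, is that $W_S$ lies in $\mathcal{W}$ only when the finite verification condition holds, i.e. when $\{\, p \in \Var \mid \sigma(p) \in E_{\vert T_0}(t')\,\}$ is finite at every $t'$, which can fail for arbitrary $\sigma$. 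I would sidestep this by replacing $\sigma$, on all variables outside the finite set $\Var(\Gamma) \cup \Var(\Delta)$, by an injection into the non-assertible equations $\{\, \overline{0} = \overline{n} \mid n \geq 1 \,\}$. The resulting $\sigma'$ has finite fibres, agrees with $\sigma$ on $\Var(\Gamma) \cup \Var(\Delta)$ so that $\sigma'(\Gamma) = \sigma(\Gamma)$ and $\sigma'(\Delta) = \sigma(\Delta)$, and yields a copy model lying in $\mathcal{W}$; the atomic prevalence condition for this copy model follows from that of $S$, which is by definition for $S \in \mathcal{S}_{\fin}$ and by Proposition \ref{proposition: Prevalence property} for $S = S_\infty$. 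Proposition \ref{proposition: A copy model} then exhibits a node of this strict finitistic model witnessing $\Gamma \not\models_{\mathcal{W}}^V \Delta$.

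For $\Leftarrow$, suppose $\Gamma \not\models_{\mathcal{W}}^V \Delta$, witnessed by some $W \in \mathcal{W}$ and a node $k$ of $W$ forcing all of $\Gamma$ but nothing in $\Delta$. I would build a faithful substitution $\sigma$ by setting $\sigma(p)$ to be $\overline{0} = \overline{0}$ when $\models_W^A p$ and $\overline{0} = \overline{1}$ otherwise; since $\overline{0} = \overline{0}$ is eventually added to $E$ in $S_\infty$ while $\overline{0} = \overline{1}$ is never added, faithfulness is immediate. Proposition \ref{proposition: W into C}(ii) then furnishes a bijection $f_\sigma$ with $f_\sigma(k) \models_{S_\infty} \sigma(A)$ iff $k \models_W A$ for every $A \in \Fm$, so at $f_\sigma(k)$ all of $\sigma(\Gamma)$ holds while no member of $\sigma(\Delta)$ does. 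Since $S_\infty \in \mathcal{S}$, this refutes the right-hand side when applied to this $\sigma$, completing the contrapositive.
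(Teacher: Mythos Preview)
Your approach is essentially the paper's: contraposition in both directions, using the copy model $W_S$ (Proposition~\ref{proposition: A copy model}) for $(\Rightarrow)$ and a faithful substitution with Proposition~\ref{proposition: W into C} for $(\Leftarrow)$.

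Your extra care in the $(\Rightarrow)$ direction is warranted and in fact patches a point the paper glosses over. The paper asserts $W_S \in \mathcal{W}$ for \emph{every} $\sigma$, but as you observe this fails when $\sigma$ maps infinitely many variables to a single assertible equation, since then the finite verification condition breaks at any node forcing that equation. Your modification of $\sigma$ outside the finite set $\Var(\Gamma)\cup\Var(\Delta)$ to an injection into non-assertible equations is a clean fix: it leaves $\sigma(\Gamma)$ and $\sigma(\Delta)$ unchanged, forces $\{p \mid t \in v_S(p)\} \subseteq \Var(\Gamma)\cup\Var(\Delta)$ at every node, and atomic prevalence for $W_S$ follows from that of $S$ exactly as you say. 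The $(\Leftarrow)$ direction is the same as the paper's, only with the faithful $\sigma$ made explicit.
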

\begin{proof}
  Fix $\Gamma$ and $\Delta$, and prove the contraposition. ($\implies$) If there are a $\sigma$ and an $S$ such that $\sigma (\Gamma) \not \models_{S}^V \sigma (\Delta)$, then consider the copy $W_S$ of $S$ under $\sigma$. Then by proposition \ref{proposition: A copy model} $\Gamma \not \models_{W_S}^V \Delta$. ($\impliedby$) If there is a $W$ such that $\Gamma \not \models_W^V \Delta$, then consider a substitution faithful to $W$, and use proposition \ref{proposition: W into C}.
\end{proof}

Proposition \ref{proposition: Model contraction in S} can be sharpened: the concept of semantic consequence in $\mathcal{W}$ boils down to that in the class $\mathcal{W}_2 \subseteq \mathcal{W}$ of the 2-node models. We will appeal to this fact when comparing our logic with intermediate logics. Let $W = \langle K, \leq, v \rangle \in \mathcal{W}$. Define for each $k \in K$, the \textit{contraction model $W \vert k \in \mathcal{W}_2$} to be $\langle \{ r, k \}, \leq', v' \rangle$ where $r <' k$, and $v'(p) = v(p) \cap \{ r, k \}$. Given $A \in \Fm$, let $K_A \subseteq K$ be the set of the \textit{contraction nodes $k$} such that for all $p \in \Var(A)$, $\models_W^A p$ implies $k \in v(p)$.

\begin{proposition}[Model contraction]\label{proposition: Model contraction}
  (i) For any $k \in K_A$, $\models_W^A A$ iff $k \models_{W \vert k} A$. (ii) For any $k \in K_A$, $\models_W^V A$ iff $r \models_{W \vert k} A$. (iii) $\models_\mathcal{W}^V A$ iff $\models_{\mathcal{W}_2}^V A$.
\end{proposition}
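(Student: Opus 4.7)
The plan is to prove (i) and (ii) by induction on $A$, and deduce (iii) from (ii). Throughout, I use that $k \in K_A$ implies $k \in K_B \cap K_C$ for any direct subformulas $B, C$ of $A$ (since $\Var(A) = \Var(B) \cup \Var(C)$ in the compound cases), so the induction hypothesis applies at the same contraction node.

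For (i), the atomic case is immediate from the definition of $K_A$: $\models_W^A p$ iff some node lies in $v(p)$, which by the defining property of a contraction node holds iff $k \in v(p)$, iff $k \models_{W \vert k} p$. The $\wedge$ and $\vee$ cases reduce to applying the induction hypothesis together with proposition \ref{proposition: Assertibility and validity calculations}(i)(a)--(b) carried over to $W$, using the forcing condition at $k$. The implication case is the crux: since $k$ is maximal in $W \vert k$, taking $k'' = k' = k$ shows $k \models_{W \vert k} B \to C$ iff ``$k \models_{W \vert k} B$ implies $k \models_{W \vert k} C$''. By the induction hypothesis this is $\models_W^A B \Rightarrow \models_W^A C$, which by proposition \ref{proposition: Assertibility and validity calculations}(i)(c) carried over is $\models_W^A B \to C$.

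For (ii), the atomic case follows from upward closure of $v$ and the minimality of $r$: $\models_W^V p$ iff $r \in v(p)$ iff $r \models_{W \vert k} p$. Conjunction is direct. For disjunction, persistence at the root of $W$ gives $\models_W^V B \vee C$ iff $r \models_W B$ or $r \models_W C$ iff $\models_W^V B$ or $\models_W^V C$, to which the induction hypothesis applies together with the forcing condition at $r$ in $W \vert k$. For implication, rewrite $\models_W^V B \to C$ via proposition \ref{proposition: Assertibility and validity calculations}(ii)(d) carried over as $\models_W^A B \to C$, apply part (i) at $k \in K_{B \to C}$, and observe that in $W \vert k$ persistence collapses the obligation at $r$ to the obligation at $k$: both $r \models_{W \vert k} B \to C$ and $k \models_{W \vert k} B \to C$ are equivalent to ``$k \models_{W \vert k} B$ implies $k \models_{W \vert k} C$''.

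For (iii), the forward direction is trivial since $\mathcal{W}_2 \subseteq \mathcal{W}$. For the converse, suppose $\not \models_W^V A$ for some $W \in \mathcal{W}$. To produce $k \in K_A$: if no variable of $A$ is assertible then any $k$ works; otherwise let $p_1, \ldots, p_n$ enumerate the assertible variables in the finite set $\Var(A)$, and starting from $r_0 = r$ iteratively invoke the full prevalence property (proposition \ref{proposition: Full prevalence property} carried over) to pick $r_i \geq r_{i-1}$ with $r_i \in v(p_i)$, where upward closure of $v$ preserves earlier selections, so $k := r_n \in K_A$. One then routinely checks $W \vert k \in \mathcal{W}_2$ (finiteness is trivial; atomic prevalence in $W \vert k$ is witnessed by $k$ itself), and concludes via (ii) that $\not r \models_{W \vert k} A$, giving $\not \models_{\mathcal{W}_2}^V A$. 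The main obstacle is the implication case of (i): one must notice that at the top node of $W \vert k$ our strict finitistic implication collapses to material implication, which is precisely what lets the induction mesh with the classical-style calculation of assertibility in proposition \ref{proposition: Assertibility and validity calculations}(i)(c).
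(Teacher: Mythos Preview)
Your proposal is correct and follows essentially the same approach as the paper. The paper's own proof is merely ``Similar to proposition \ref{proposition: Model contraction in S}'', which in turn says to argue (i) and (ii) by induction using proposition \ref{proposition: Assertibility and validity calculations} and the fact that $K_{B \land C}, K_{B \lor C}, K_{B \to C} \subseteq K_B \cap K_C$, with (ii) appealing to (i), and (iii) immediate from (ii); your write-up spells out precisely these steps, including the key observation that at the top node of $W\vert k$ the implication clause collapses to a material conditional.
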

\begin{proof}
  Similar to proposition \ref{proposition: Model contraction in S}.
\end{proof}
\noindent For each $k \in K$, define the \textit{submodel $W_k \in \mathcal{W}$ generated by $k$} to be ${\langle K', \leq', v' \rangle}$ where $K' = \{ k' \in K \mid k \leq k' \}$, $\leq'$ is the restriction of $\leq$ to $K'$ and $v'(p) = v(p) \cap K'$. Confirm by induction that for all $l \in K'$, $l \models_W A$ iff $l \models_{W_k} A$.
\begin{proposition}\label{proposition: Strong contraction}
  $\Gamma \models_\mathcal{W}^V \Delta$ iff $\Gamma \models_{\mathcal{W}_2}^V \Delta$.
\end{proposition}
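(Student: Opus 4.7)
The plan is to parallel proposition \ref{proposition: Model contraction}(iii) but generalised to finite sets of premises and conclusions. The direction $(\Longrightarrow)$ is immediate because $\mathcal{W}_2 \subseteq \mathcal{W}$, so all the work lies in the contraposition of $(\Longleftarrow)$. Assume that there exist $W = \langle K, \leq, v \rangle \in \mathcal{W}$ and $k^* \in K$ with $k^* \models_W A$ for every $A \in \Gamma$ and $k^* \not\models_W B$ for every $B \in \Delta$; the task is then to manufacture a witness in $\mathcal{W}_2$.

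I would first pass to the submodel $W_{k^*}$ generated by $k^*$: forcing is preserved, as remarked just before the proposition, so taking $k^*$ as the root $r$ of $W_{k^*}$, every $A \in \Gamma$ is forced at $r$ while every $B \in \Delta$ fails there. By persistence, $\models_{W_{k^*}}^V A$ for all $A \in \Gamma$ and $\not\models_{W_{k^*}}^V B$ for all $B \in \Delta$. The heart of the argument, and the only step not already settled by proposition \ref{proposition: Model contraction}, is to produce a single node of $W_{k^*}$ that serves as a contraction node for the entire finite set $\Phi := \Gamma \cup \Delta$ at once. Since $\Var(\Phi) = \bigcup_{A \in \Phi} \Var(A)$ is finite, I would enumerate the assertible variables $p_1, \ldots, p_n$ among $\Var(\Phi)$ and, starting from some node above $r$, iteratively climb via the atomic prevalence condition: pick $l_1$ forcing $p_1$, then $l_2 \geq l_1$ forcing $p_2$, and so on. Persistence ensures that $l := l_n$ forces all of $p_1, \ldots, p_n$, placing $l \in K_A$ for every $A \in \Phi$ simultaneously.

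Finally, I would form $W' := W_{k^*} \vert l \in \mathcal{W}_2$. Applying proposition \ref{proposition: Model contraction}(ii) to each $A \in \Phi$ individually, the root of $W'$ forces exactly those $A \in \Phi$ with $\models_{W_{k^*}}^V A$; so it forces every $A \in \Gamma$ and no $B \in \Delta$, giving $\Gamma \not\models_{W'}^V \Delta$ as required. One minor technicality is the degenerate case where $k^*$ is a leaf of $W$, so that $W_{k^*}$ collapses to the single node $r$: this can be handled by duplicating $r$ into a 2-node model with the same valuation, where an easy induction confirms forcing at $r$ is unchanged.
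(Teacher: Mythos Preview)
Your proof is correct and takes essentially the same route as the paper: pass to the generated submodel at the witnessing node, pick a contraction node covering all the relevant variables, and invoke proposition~\ref{proposition: Model contraction}(ii). The only cosmetic differences are that the paper argues $(\Longleftarrow)$ directly rather than by contraposition and bundles everything into the single formula $\bigwedge\Gamma \land \bigvee\Delta$ before contracting; your explicit handling of the single-node degenerate case is a detail the paper leaves implicit.
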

\begin{proof}
  ($\impliedby$) We can assume $\Gamma \neq \emptyset$. Suppose $k \in K$ and $k \models_W \bigwedge \Gamma$. Take $k' \geq k$ such that $k' \in K_{\bigwedge \Gamma \land \bigvee \Delta}$, where $\bigvee \Delta$ is $\bot$ if $\Delta = \emptyset$. Since $k \models_W \bigwedge \Gamma$, $\models_{W_k}^V \bigwedge \Gamma$. So $k \models_{W_k \vert k'} \bigwedge \Gamma$ by proposition \ref{proposition: Model contraction} (ii). $\Gamma \models_{\mathcal{W}_2}^V \Delta$ implies $k \models_{W_k \vert k'} \bigvee \Delta$. The rest is easy.
\end{proof}

\subsection{A complete sequent calculus \textbf{SF}}\label{section: A complete sequent calculus SF}
%%%%%%%%%%%%%%%%
%%%%%%%%%%%%%%%%
\def\fCenter{\ \vdash\ } %%%%%%%%%%%% \fCenter is the point of alignment in the prooftree environment. This line defines it to be \vdash.
%%%%%%%%%%%%%%%%
%%%%%%%%%%%%%%%%
A sequent calculus \textbf{SF} is defined, and proven to be sound and complete with respect to the strict finitistic semantics. For any finite sequences $\Gamma$ and $\Delta$ in $\Fm$, $\Gamma \vdash \Delta$ is a sequent in \textbf{SF}. In what follows, $\neg \neg \Gamma$ for any $\Gamma$ denotes $\{ \neg \neg A \mid A \in \Gamma \}$. The initial sequents are $\bot \vdash$ and $p \fCenter p$ for all $p \in \Var$. \textbf{SF} has \textbf{LK}'s structural rules. The logical rules are:
\begin{multicols}{2}
\begin{prooftree}
  \Axiom$A, \Gamma \fCenter \Delta$
  \RightLabel{$\land$ L$_1$}
  \UnaryInf$B \land A, \Gamma \fCenter \Delta$
\end{prooftree}
\begin{prooftree}
  \Axiom$\Gamma \fCenter \Delta, A$
  \Axiom$\Gamma \fCenter \Delta, B$
  \RightLabel{$\land$ R}
  \BinaryInf$\Gamma \fCenter \Delta, A \land B$
\end{prooftree}
\end{multicols}

\begin{multicols}{2}
\begin{prooftree}
  \Axiom$A, \Gamma \fCenter \Delta$
  \RightLabel{$\land$ L$_2$}
  \UnaryInf$A \land B, \Gamma \fCenter \Delta$
\end{prooftree}
\begin{prooftree}
  \Axiom$\Gamma \fCenter \Delta, A$
  \RightLabel{$\lor$ R$_1$}
  \UnaryInf$\Gamma \fCenter \Delta, A \lor B$
\end{prooftree}
\end{multicols}

\begin{multicols}{2}
\begin{prooftree}
  \Axiom$A, \Gamma \fCenter \Delta$
  \Axiom$B, \Gamma \fCenter \Delta$
  \RightLabel{$\lor$ L}
  \BinaryInf$A \lor B, \Gamma \fCenter \Delta$
\end{prooftree}
\begin{prooftree}
  \Axiom$\Gamma \fCenter \Delta, A$
  \RightLabel{$\lor$ R$_2$}
  \UnaryInf$\Gamma \fCenter \Delta, B \lor A$
\end{prooftree}
\end{multicols}

\begin{multicols}{2}
\begin{prooftree}
  \Axiom$\Gamma \fCenter \Delta, A$
  \Axiom$B, \Pi \fCenter \Sigma$
  \RightLabel{$\to$ L}
  \BinaryInf$\Gamma, A \to B, \Pi \fCenter \Delta, \neg \neg \Sigma$
\end{prooftree}
\begin{prooftree}
  \Axiom$\Gamma, A \fCenter \neg \neg B, \Delta$
  \RightLabel{$\to$ R}
  \UnaryInf$\Gamma \fCenter A \to B, \neg \neg \Delta.$
\end{prooftree}
\end{multicols}
\noindent We write $\Gamma \vdash_{\SF} \Delta$ to mean that $\Gamma \vdash \Delta$ is derivable in $\SF$.

Proving \textbf{SF}'s soundness is a routine matter.
\begin{proposition}[Soundness]\label{proposition: Soundness}
  $\Gamma \vdash_{\SF} \Delta$ implies $\Gamma \models_{\mathcal{W}}^V \Delta$.
\end{proposition}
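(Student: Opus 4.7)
The plan is to proceed by induction on the height of the derivation of $\Gamma \vdash \Delta$ in $\SF$. I would fix an arbitrary $W = \langle K, \leq, v \rangle \in \mathcal{W}$ and an arbitrary node $k \in K$, assume $k \models_W A$ for every $A \in \Gamma$, and show $k \models_W B$ for some $B \in \Delta$. The initial sequents $\bot \vdash$ and $p \vdash p$ are immediate from the forcing clauses. The structural rules of $\LK$ carry over without incident, with persistence of $\models_W$ doing any needed lifting. The conjunction and disjunction rules likewise pose no difficulty: each reduces to unfolding the clauses in Definition~\ref{Actual verification conditions} (transposed to $W$) together with the induction hypothesis, much as in the soundness proof for $\LK$.

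The only genuinely interesting cases are the two implication rules, whose right-hand sides carry the extra $\neg\neg$. Here the main tool is the full prevalence property (proposition~\ref{proposition: Full prevalence property}), from which we extract the working lemma that, for any $W \in \mathcal{W}$, any $l \in K$, and any $C \in \Fm$, one has $l \models_W \neg\neg C$ iff $\models_W^A C$. This is exactly the slack that makes the $\neg\neg \Sigma$ and $\neg\neg \Delta$ in the conclusions of $\to$L and $\to$R justified: a witness for $C$ found anywhere in $W$ can be pulled back to give $l \models_W \neg\neg C$ at an arbitrary $l$.

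For $\to$R, assume the premise $\Gamma, A \models_W^V \neg\neg B, \Delta$ is valid and suppose $k \models_W \bigwedge \Gamma$ but $k \not\models_W A \to B$. Then there exists $k' \geq k$ with $k' \models_W A$ and no $k'' \geq k'$ verifying $B$; by persistence $k' \models_W \bigwedge \Gamma$, so the induction hypothesis gives $k' \models_W \neg\neg B$ or $k' \models_W D$ for some $D \in \Delta$. The first disjunct, combined with the working lemma and full prevalence, would supply such a $k''$, contradiction; hence the second disjunct holds, and full prevalence again yields $k \models_W \neg\neg D$, which lies in $\neg\neg \Delta$. For $\to$L, assume $k$ forces $\bigwedge \Gamma$, $A \to B$, and $\bigwedge \Pi$. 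The left premise gives $k \models_W D$ for some $D \in \Delta$ (done) or $k \models_W A$; in the latter case the forcing clause of $\to$ yields some $k'' \geq k$ with $k'' \models_W B$, and persistence of $\Pi$ together with the induction hypothesis on the right premise produces $C \in \Sigma$ with $k'' \models_W C$, whereupon full prevalence upgrades this to $k \models_W \neg\neg C \in \neg\neg \Sigma$.

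The main obstacle, then, is conceptual rather than technical: one must keep clear the distinction between $\models^V$, $\models^A$, forcing at a node, and the fact that $\neg\neg C$ is a ``global'' statement about assertibility, and recognise that the $\neg\neg$-decorations in the conclusions of $\to$L and $\to$R are precisely what is needed to absorb the time-gap built into the strict finitistic clause for implication. Once the working lemma relating $\neg\neg C$ to $\models^A C$ is isolated, the remaining verifications are routine.
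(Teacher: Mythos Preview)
Your proposal is correct and follows essentially the same approach as the paper: induction on derivations, with the implication rules handled via the prevalence property and the equivalence of $l \models_W \neg\neg C$ with $\models_W^A C$. The only cosmetic difference is that for $\to$R the paper opens with a case split on the validity $\models_W^V \neg A \lor \neg\neg A$, whereas you argue by assuming $k \not\models_W A \to B$; the two routes unwind to the same computation.
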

\begin{proof}\label{proof: Soundness}
  Induction. Let $W = \langle K, \leq, v \rangle \in \mathcal{W}$, and $k \in K$. {($\to$ L)} Suppose $k \models_W {\bigwedge \Gamma \land (A \to B) \land \bigwedge \Pi}$. Then by the induction hypothesis, $k \models_W \bigvee \Delta \lor A$. If $k \models_W \bigvee \Delta$, we are done. If $k \models_W A$, then there is a $k' \geq k$ such that $k' \models_W B$, and hence $k' \models_W \bigvee \Sigma$. ($\to$ R) Suppose $k \models_W \bigwedge \Gamma$. Use that $\models_W^V \neg A \lor \neg \neg A$. If $\models_W^V \neg A$, then $k \models_W A \to B$. If $\models_W^V \neg \neg A$, then there is a $k' \geq k$ such that $k' \models_W A$, and hence $k' \models \neg \neg B \lor \bigvee \Delta$. The rest is easy.
\end{proof}

We note that $A \vdash_{\SF} A$, $A \vdash_{\SF} \neg \neg A$ and $\neg \neg \neg A \vdash_{\SF} \neg A$ for all $A \in \Fm$. Also, $A \to B \dashv \vdash_{\SF} \neg A \lor \neg \neg B$, and hence $\vdash_{\SF} \neg A \lor \neg \neg A$. Admissible are
\begin{multicols}{2}
\begin{prooftree}
  \Axiom$A, \Gamma \fCenter \Delta$
  \RightLabel{$\neg \neg$ L}
  \UnaryInf$\neg \neg A, \Gamma \fCenter \neg \neg \Delta$
\end{prooftree}
\begin{prooftree}
  \Axiom$\Gamma \fCenter \Delta, A$
  \RightLabel{$\neg \neg$ R}
  \UnaryInf$\Gamma \fCenter \Delta, \neg \neg A.$
\end{prooftree}
\end{multicols}

We can reproduce the stability result of the ST formulas in this system. Confirm that derivable are (i) $\neg \neg (A \land B) \vdash \neg \neg A \land \neg \neg B$, (ii) $\neg \neg (A \lor B) \vdash \neg \neg A \lor \neg \neg B$ and (iii) $\neg \neg (A \to B) \vdash A \to B$.
\begin{proposition}
  $\neg \neg S \vdash_{\SF} S$ for all $S \in \ST$.
\end{proposition}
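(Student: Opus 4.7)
The plan is to proceed by induction on the structure of $S \in \ST$, making essential use of the three derivability facts (i)--(iii) stated just before the proposition.

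For the base cases: when $S$ is of the form $A \to B$, fact (iii) gives $\neg \neg (A \to B) \vdash_\SF A \to B$ immediately. When $S = \bot$, we need $\neg \neg \bot \vdash_\SF \bot$. I would derive this from the initial sequent $\bot \vdash$ by first applying $\to$ R (together with a weakening on the right) to obtain $\vdash \bot \to \bot$, then applying $\to$ L to $\vdash \bot \to \bot$ and $\bot \vdash$ with the principal formula $(\bot \to \bot) \to \bot$, yielding $\neg \neg \bot \vdash$, from which $\neg \neg \bot \vdash \bot$ follows by weakening.

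For the inductive step with $S = S_1 \land S_2$, the induction hypothesis provides $\neg \neg S_1 \vdash_\SF S_1$ and $\neg \neg S_2 \vdash_\SF S_2$. Starting from $\neg \neg (S_1 \land S_2) \vdash \neg \neg S_1 \land \neg \neg S_2$ (fact (i)), I would split via $\land$ L$_1$/$\land$ L$_2$ into the two conjuncts, apply the IH on each side using Cut, and recombine by $\land$ R. The case $S = S_1 \lor S_2$ is symmetric: using fact (ii) one gets $\neg \neg(S_1 \lor S_2) \vdash \neg \neg S_1 \lor \neg \neg S_2$, and then $\lor$ L together with the IH (applied in the two branches, followed by $\lor$ R$_1$ and $\lor$ R$_2$ respectively) yields $\neg \neg (S_1 \lor S_2) \vdash S_1 \lor S_2$, which is closed by Cut.

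There is no real obstacle here; the work is already packaged into facts (i)--(iii), which handle the only point where classical behaviour of double negation could fail (namely under $\to$, where double negation essentially trivialises in \SF). The only mildly delicate step is the $\bot$ base case, whose derivation I sketched above; everything else is a routine structural induction using Cut and the standard $\land$/$\lor$ rules.
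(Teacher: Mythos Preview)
Your proposal is correct and follows exactly the approach of the paper, which gives only a one-line proof: ``Induction on $S$. $\bot$'s case is easy. Use (i--iii) above.'' You have simply spelled out the details of that induction, including a concrete derivation for the $\bot$ base case that the paper leaves implicit.
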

\begin{proof}
  Induction on $S$. $\bot$'s case is easy. Use (i-iii) above.
\end{proof}
\noindent Here, $\neg \neg A \vdash_{\SF} A$ iff $\vdash_{\SF} A \lor \neg A$ for all $A \in \Fm$. Therefore $\vdash_{\SF} S \lor \neg S$ for all $S \in \ST$.  $A, A \to S \vdash_{\SF} S$ is also plain. Inferences involving an ST formula resemble \textbf{LK}.
\begin{proposition}
  For any $S \in \ST$, the following $\LK$ rules are admissible.
\begin{multicols}{2}
\begin{prooftree}
  \Axiom$\Gamma \fCenter \Delta, A$
  \Axiom$S, \Pi \fCenter \Sigma$
  \RightLabel{$\to$ L$^*$}
  \BinaryInf$\Gamma, A \to S, \Pi \fCenter \Delta, \Sigma$
\end{prooftree}
\begin{prooftree}
  \Axiom$\Gamma, S \fCenter B, \Delta$
  \RightLabel{$\to$ R$^*$}
  \UnaryInf$\Gamma \fCenter S \to B, \Delta$
\end{prooftree}
\end{multicols}
\end{proposition}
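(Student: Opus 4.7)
The key idea is to exploit the preceding theorem $\vdash_{\SF} S \lor \neg S$ for $S \in \ST$, and to case-split on $S$ versus $\neg S$. This is precisely what is needed to bypass the $\neg\neg$ decorations that the primitive $\to$ L and $\to$ R rules leave on the succedent: in each application below the $\neg\neg$ will get attached to an empty sequence and so disappear.

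For $\to$ L$^*$, I would first build the auxiliary sequent $A, A \to S, \neg S \vdash$ in two steps. Applying $\to$ L to $S \vdash S$ and the initial sequent $\bot \vdash$ yields $S, \neg S \vdash$ (the $\neg\neg\Sigma$ in the conclusion of $\to$ L is empty). A second application of $\to$ L, now to $A \vdash A$ and $S, \neg S \vdash$, produces $A, A \to S, \neg S \vdash$. Cut this against the first premise $\Gamma \vdash \Delta, A$ on $A$, then weaken to obtain $\Gamma, A \to S, \Pi, \neg S \vdash \Delta, \Sigma$. The other case is immediate: weaken the second premise to $\Gamma, A \to S, \Pi, S \vdash \Delta, \Sigma$. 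Combine these two by $\lor$ L on the formula $S \lor \neg S$, and cut with the theorem $\vdash S \lor \neg S$ to conclude $\Gamma, A \to S, \Pi \vdash \Delta, \Sigma$.

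For $\to$ R$^*$, I would first prove two small lemmas. First, $B \vdash S \to B$: weaken the previously noted $B \vdash \neg\neg B$ to $B, S \vdash \neg\neg B$ and apply $\to$ R (the resulting $\neg\neg\Delta$ is empty). Second, $\neg S \vdash S \to B$: using $S, \neg S \vdash$ from the previous paragraph, weaken to $\neg S, S \vdash \neg\neg B$ and apply $\to$ R. With these in hand, cut the given premise $\Gamma, S \vdash B, \Delta$ against $B \vdash S \to B$ on $B$ to get $\Gamma, S \vdash S \to B, \Delta$, and weaken $\neg S \vdash S \to B$ to $\Gamma, \neg S \vdash S \to B, \Delta$. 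Combine by $\lor$ L and cut with $\vdash S \lor \neg S$.

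I anticipate no serious obstacle: both derivations are uniform in $S \in \ST$ and depend only on the excluded-middle theorem for $\ST$ together with cuts and weakenings inherited from \textbf{LK}'s structural rules included in $\SF$. The mild point worth checking is that every instance of the primitive $\to$ L and $\to$ R in the construction is arranged so that the attached $\neg\neg\Sigma$ or $\neg\neg\Delta$ is empty, which is indeed automatic in the two set-ups above.
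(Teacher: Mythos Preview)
Your argument is correct. For $\to$ R$^*$ it is essentially the paper's proof: the paper also uses the two lemmas $\neg S \vdash S \to B$ and $B \vdash S \to B$, obtains $\Gamma \vdash B, \neg S, \Delta$ by cutting the premise against $\vdash S, \neg S$, and then cuts twice; you instead combine via $\lor$ L and a final cut with $\vdash S \lor \neg S$, which is the same idea in a different order.

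For $\to$ L$^*$ your route genuinely differs. The paper applies the primitive $\to$ L \emph{once}, to the given left premise $\Gamma \vdash \Delta, A$ together with $S \vdash S$, obtaining $\Gamma, A \to S \vdash \Delta, \neg\neg S$; it then removes the $\neg\neg$ by cutting with the stability lemma $\neg\neg S \vdash S$, and finishes with a cut against the second premise $S, \Pi \vdash \Sigma$. You instead arrange two applications of $\to$ L with empty $\Sigma$ so that no $\neg\neg$ ever appears, derive $A, A \to S, \neg S \vdash$, and then case-split on $S \lor \neg S$. Both work because $\neg\neg S \vdash S$ and $\vdash S \lor \neg S$ are interderivable for $S \in \ST$; the paper's version is a little shorter and makes the dependence on stability transparent, while yours has the pleasant feature of being uniform with your $\to$ R$^*$ argument and of never producing a $\neg\neg$ that must later be eliminated.
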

\begin{proof}
  ($\to$ L$^*$) Apply ($\to$ L) to the left premise and $S \vdash S$. Then cut $\neg \neg S$ using $\neg \neg S \vdash_{\SF} S$. Finally cut $S$ using the right premise. ($\to$ R$^*$) Cut $S$ from $\vdash S, \neg S$ and the premise. Use $\neg S \vdash_{\SF} S \to B$ and $B \vdash_{\SF} S \to B$.
\end{proof}

Now let us prove the completeness of \textbf{SF}. Given $\Gamma \subseteq \Psi \subseteq \Fm$, we say that $\Gamma$ is a \textit{prime theory with respect to $\Psi$} if, for all $A, B \in \Psi$, (i) $A \lor B \in \Gamma$ implies $A \in \Gamma$ or $B \in \Gamma$, and (ii) $\Gamma \vdash_{\SF} A$ implies $A \in \Gamma$. Also, we let $\Var(\Gamma) = \bigcup_{A \in \Gamma} \Var(A)$; and $\Fm(\Gamma)$ be the set of all formulas built only from $\Var(\Gamma)$. For $\Gamma, \Delta \subseteq \Fm$, we say $\langle \Gamma, \Delta \rangle$ is \textit{consistent} if for any $\Gamma' \subseteq_{\fin} \Gamma$ and $\Delta' \subseteq_{\fin} \Delta$, $\Gamma' \not \vdash_{\SF} \Delta'$; otherwise, it is \textit{inconsistent}.

The proof progresses as follows. First, let $\Gamma, \Delta \subseteq_{\fin} \Fm$ such that $\Gamma \not \vdash_{\SF} \Delta$ be given. Then we construct a prime theory.
\begin{lemma}[Extension lemma]
  There is a prime theory $\Gamma^* \supseteq \Gamma$ with respect to $\Fm(\Gamma \cup \Delta)$ such that $\langle \Gamma^*, \Delta \rangle$ is consistent.
\end{lemma}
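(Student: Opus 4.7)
The plan is the familiar Henkin-style extension argument. Because $\Gamma$ and $\Delta$ are finite sets of formulas, $\Var(\Gamma \cup \Delta)$ is finite, and hence $\Fm(\Gamma \cup \Delta)$ is countable. I would fix an enumeration $A_0, A_1, A_2, \ldots$ of $\Fm(\Gamma \cup \Delta)$, set $\Gamma_0 := \Gamma$, and recursively define
\[
  \Gamma_{n+1} := \begin{cases} \Gamma_n \cup \{ A_n \} & \text{if } \langle \Gamma_n \cup \{ A_n \}, \Delta \rangle \text{ is consistent,} \\ \Gamma_n & \text{otherwise.} \end{cases}
\]
Then put $\Gamma^* := \bigcup_{n} \Gamma_n$, so that $\Gamma \subseteq \Gamma^* \subseteq \Fm(\Gamma \cup \Delta)$. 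Consistency of $\langle \Gamma^*, \Delta \rangle$ is immediate: every finite subset of $\Gamma^*$ sits inside some $\Gamma_n$, and $\langle \Gamma_n, \Delta \rangle$ is consistent by induction on the construction.

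For the closure condition (ii), suppose $A \in \Fm(\Gamma \cup \Delta)$ and $\Gamma^* \vdash_{\SF} A$ while $A \notin \Gamma^*$. Then $A = A_n$ for some $n$, and $A_n$ was rejected at stage $n+1$, so there are finite $\Gamma'_n \subseteq \Gamma_n$ and $\Delta' \subseteq \Delta$ with $\Gamma'_n, A \vdash_{\SF} \Delta'$. Picking a finite $\Gamma^{**} \subseteq \Gamma^*$ witnessing $\Gamma^{**} \vdash_{\SF} A$ and applying cut (available since $\SF$ inherits $\LK$'s structural rules) yields $\Gamma^{**}, \Gamma'_n \vdash_{\SF} \Delta'$. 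Choosing $N$ large enough that $\Gamma^{**} \cup \Gamma'_n \subseteq \Gamma_N$, this contradicts consistency of $\langle \Gamma_N, \Delta \rangle$.

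For the primality condition (i), suppose $A \lor B \in \Gamma^*$ with $A, B \in \Fm(\Gamma \cup \Delta)$ yet neither $A$ nor $B$ lies in $\Gamma^*$. Both were rejected at their respective enumeration stages, giving finite $\Delta'_A, \Delta'_B \subseteq \Delta$ and some common later stage $N$ at which $\Gamma_N \ni A \lor B$, $\Gamma_N, A \vdash_{\SF} \Delta'_A$ and $\Gamma_N, B \vdash_{\SF} \Delta'_B$. Weakening to a shared succedent and applying $\lor$ L give $\Gamma_N, A \lor B \vdash_{\SF} \Delta'_A, \Delta'_B$, and a contraction on $A \lor B$ then yields $\Gamma_N \vdash_{\SF} \Delta'_A, \Delta'_B$, contradicting consistency of $\langle \Gamma_N, \Delta \rangle$.

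The construction is routine, and the only subtlety is the closure step, where cut is used to combine the derivation of $A$ from $\Gamma^*$ with the derivation witnessing the rejection of $A$. Since $\SF$ is declared to carry $\LK$'s structural rules, cut is admissible and no further work is needed.
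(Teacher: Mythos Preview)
Your argument is correct, but it follows a different construction from the paper's. You build $\Gamma^*$ as a maximal consistent extension in the Lindenbaum style: enumerate all of $\Fm(\Gamma\cup\Delta)$, add each formula whenever doing so preserves consistency with $\Delta$, and then verify closure and primality afterwards using Cut and $\lor\,$L respectively. The paper instead enumerates only the \emph{disjunctions} in $\Fm(\Gamma\cup\Delta)$ with infinite repetition, and at each stage where $\Gamma_n\vdash_{\SF}A_n\lor B_n$ it adds whichever disjunct keeps $\Delta$ unprovable (one must, by $\lor\,$L and Cut). Primality is then built into the construction, and deductive closure is obtained for free by reading $\Gamma^*\vdash_{\SF}A$ as $\Gamma^*\vdash_{\SF}A\lor A$.

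What each buys: the paper's disjunction-splitting construction is the standard manoeuvre for logics with the disjunction property and yields primality directly, at the cost of the ``infinite repetition'' bookkeeping. Your Lindenbaum-style extension is arguably more transparent and reuses a familiar template; the price is that you must discharge primality and closure by separate little arguments, each invoking a structural rule of $\SF$. Both routes rely on the same proof-theoretic ingredients (Cut, $\lor\,$L, weakening, contraction), so neither is essentially stronger or more elementary than the other.
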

\begin{proof}
  Define $\Gamma^*$ as follows. First, $\Gamma_0 := \Gamma$. Then $\Gamma_0 \not \vdash_{\SF} \Delta$. Let $\langle A_n \lor B_n \rangle_{n \in \mathbb{N}}$ enumerate with infinite repetition all disjunctive formulas in $\Fm(\Gamma \cup \Delta)$. Let $\Gamma_n$ with $\Gamma_n \not \vdash_{\SF} \Delta$ be given. If $\Gamma_n \vdash_{\SF} A_n \lor B_n$, then $\Gamma_n, A \not \vdash_{\SF} \Delta$ or $\Gamma_n, B \not \vdash_{\SF} \Delta$, since \textbf{SF} has (Cut). If the former, define $\Gamma_{n+1}$ to be $\Gamma_n \cup \{ A \}$; otherwise $\Gamma_n \cup \{ B \}$. Then $\Gamma_{n+1} \not \vdash_{\SF} \Delta$. Define $\Gamma^* := \bigcup_{n \in \mathbb{N}} \Gamma_n$.
  
  $\Gamma^* \subseteq \Fm(\Gamma \cup \Delta)$ is plain by induction. (i) If $A \lor B \in \Gamma^*$, then let $n$ be the least of $m$ such that $\Gamma_m \vdash_{\SF} A \lor B$. Then there is an $n' \geq n$ such that we consider $A \lor B$ at the $n'$-th step, and hence $A \in \Gamma_{n'+1}$ or $B \in \Gamma_{n'+1}$. (ii) If $\Gamma^* \vdash_{\SF} A$, then $\Gamma^* \vdash_{\SF} A \lor A$. So, by the same argument, we conclude $A \in \Gamma^*$. Finally, for all $n \in \mathbb{N}$, $\Gamma_n \not \vdash_{\SF} \Delta$ by induction. So $\langle \Gamma^*, \Delta \rangle$ cannot be inconsistent.
\end{proof}

\begin{lemma}
  For any $A \in \Fm(\Gamma \cup \Delta)$, either $\neg A \in \Gamma^*$ or $\neg \neg A \in \Gamma^*$.
\end{lemma}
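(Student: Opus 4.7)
The plan is to exploit the derivable schema $\vdash_{\SF} \neg A \lor \neg \neg A$ (noted just after proposition \ref{proposition: Soundness}) together with the two defining properties of the prime theory $\Gamma^*$: closure under $\vdash_{\SF}$ inside $\Fm(\Gamma \cup \Delta)$ and the disjunction property inside $\Fm(\Gamma \cup \Delta)$.

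First I would check that $\neg A$ and $\neg \neg A$ lie in $\Fm(\Gamma \cup \Delta)$. Since $\neg$ is an abbreviation for $\cdot \to \bot$, and $\bot$ uses no variables, we have $\Var(\neg A) = \Var(\neg \neg A) = \Var(A) \subseteq \Var(\Gamma \cup \Delta)$, so both formulas belong to $\Fm(\Gamma \cup \Delta)$, and consequently $\neg A \lor \neg \neg A \in \Fm(\Gamma \cup \Delta)$ as well.

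Next, since $\vdash_{\SF} \neg A \lor \neg \neg A$ holds universally, we trivially have $\Gamma^* \vdash_{\SF} \neg A \lor \neg \neg A$ by (Weakening). Because $\Gamma^*$ is a prime theory with respect to $\Fm(\Gamma \cup \Delta)$, its clause (ii) (closure under $\vdash_{\SF}$ within this set) yields $\neg A \lor \neg \neg A \in \Gamma^*$. Then clause (i) (the disjunction property) gives either $\neg A \in \Gamma^*$ or $\neg \neg A \in \Gamma^*$, as required.

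There is no real obstacle here; the whole argument is a direct application of the two closure conditions that were built into $\Gamma^*$ by the previous extension lemma, combined with the one \textbf{SF}-theorem $\neg A \lor \neg \neg A$. The only minor point to watch is the membership $\neg A, \neg \neg A \in \Fm(\Gamma \cup \Delta)$, which is why it is useful that $\bot$ is a logical constant not requiring any propositional variable.
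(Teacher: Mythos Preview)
Your proof is correct and follows essentially the same route as the paper, which simply says the lemma is ``immediate from $\vdash_{\SF} \neg A \lor \neg \neg A$ and $\langle \Gamma^*, \Delta \rangle$ being consistent.'' You have merely unpacked this into the two primeness clauses and added the (useful) verification that $\neg A$, $\neg\neg A$, and their disjunction lie in $\Fm(\Gamma \cup \Delta)$.
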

\begin{proof}
  Immediate from $\vdash_{\SF} \neg A \lor \neg \neg A$ and $\langle \Gamma^*, \Delta \rangle$ being consistent.
\end{proof}
\noindent Now define $W = \langle \{ k, k' \}, \leq, v \rangle$ by $k < k'$, $k \in v(p)$ iff $p \in \Gamma^*$ and $k' \in v(p)$ iff $\neg \neg p \in \Gamma^*$. Then $W \in \mathcal{W}_2$, since $\Var(\Gamma \cup \Delta)$ is finite. $k$ reflects $\Gamma \not \vdash_{\SF} \Delta$.
\begin{lemma}[Truth lemma]
  For all $A \in \Fm(\Gamma \cup \Delta)$, (i) $\neg \neg A \in \Gamma^*$ iff $k' \models_W A$; and (ii) $A \in \Gamma^*$ iff $k \models_W A$.
\end{lemma}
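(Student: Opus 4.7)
The approach is to prove (i) and (ii) by simultaneous induction on the complexity of $A$. The base cases are immediate: for $A = p \in \Var$, both biconditionals hold by the very definition of $v$; for $A = \bot$, note that $\bot \notin \Gamma^*$ (otherwise, since $\bot \vdash_{\SF} \Delta$ is derivable by weakening from the initial sequent $\bot \vdash$, the consistency of $\langle \Gamma^*, \Delta \rangle$ would fail), and similarly $\neg\neg \bot \notin \Gamma^*$, matching $k \not\models_W \bot$ and $k' \not\models_W \bot$.

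For conjunction and disjunction, direction (ii) is routine: $A \land B \in \Gamma^*$ iff $A, B \in \Gamma^*$ by closure under derivability, and $A \lor B \in \Gamma^*$ iff $A \in \Gamma^*$ or $B \in \Gamma^*$ by primeness, and the induction hypothesis then yields the semantic equivalence at $k$. For (i), I would use the interderivabilities $\neg\neg(A \land B) \dashv\vdash_{\SF} \neg\neg A \land \neg\neg B$ and $\neg\neg(A \lor B) \dashv\vdash_{\SF} \neg\neg A \lor \neg\neg B$. The forward direction of each is the explicitly noted (i)--(ii) just above the lemma; the reverse directions follow from monotonicity of $\neg\neg$ (via the admissible $\neg\neg$ L, $\neg\neg$ R) combined with $\neg\neg\neg\neg C \dashv\vdash_{\SF} \neg\neg C$. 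Closure and primeness of $\Gamma^*$ applied to these equivalences, plus the induction hypothesis at $k'$, close the subcases.

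The implication case is the meat. I would exploit two key equivalences: syntactically, $A \to B \dashv\vdash_{\SF} \neg A \lor \neg\neg B$, and hence $\neg\neg(A \to B) \dashv\vdash_{\SF} A \to B$ (combining (iii) above with $C \vdash_{\SF} \neg\neg C$); semantically, the fact noted after the full prevalence property that $k \models_W A \to B$ iff $k \models_W \neg A \lor \neg\neg B$. So $A \to B \in \Gamma^*$ iff $\neg A \in \Gamma^*$ or $\neg\neg B \in \Gamma^*$, by closure and primeness. On the semantic side, in the two-node $W$, ``$A$ is unforced at every node'' translates via the induction hypothesis to $A \notin \Gamma^*$ and $\neg\neg A \notin \Gamma^*$, which by the preceding lemma (either $\neg A$ or $\neg\neg A$ lies in $\Gamma^*$) together with consistency (so $A$ and $\neg A$ cannot coexist in $\Gamma^*$) reduces to $\neg A \in \Gamma^*$; and ``$B$ is forced somewhere'' translates similarly to $\neg\neg B \in \Gamma^*$. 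Matching cases yields (ii). For (i), since $k'$ is maximal in $W$, $k' \models_W A \to B$ reduces to ``if $k' \models_W A$ then $k' \models_W B$'', i.e., to $\neg\neg A \in \Gamma^*$ implying $\neg\neg B \in \Gamma^*$, which one checks is equivalent to $\neg A \in \Gamma^*$ or $\neg\neg B \in \Gamma^*$, and so to $\neg\neg(A \to B) \in \Gamma^*$.

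The main obstacle is really the bookkeeping around consistency and the $\neg A / \neg\neg A$ dichotomy. One must verify that $\bot \notin \Gamma^*$ and that $A, \neg A \in \Gamma^*$ jointly force $\bot \in \Gamma^*$ (using $A, A \to \bot \vdash_{\SF} \bot$, available because $\bot \in \ST$), in order to rule out the ``both $A$ and $\neg A$'' case in the implication analysis. Once these facts are in place, the simultaneous induction on $k$ and $k'$ runs smoothly.
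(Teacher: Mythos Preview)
Your proposal is correct and follows essentially the same route as the paper's proof: simultaneous induction on $A$, using primeness and deductive closure of $\Gamma^*$ together with the interderivabilities $\neg\neg(A \land B) \dashv\vdash_{\SF} \neg\neg A \land \neg\neg B$, $\neg\neg(A \lor B) \dashv\vdash_{\SF} \neg\neg A \lor \neg\neg B$, and $\neg\neg(A \to B) \dashv\vdash_{\SF} \neg A \lor \neg\neg B$, plus the $\neg A/\neg\neg A$ dichotomy from the preceding lemma. The only organisational difference is that the paper handles (ii) for $\to$ by first establishing (i) for $A \to B$ and then invoking the stability of implication (so that $A \to B \in \Gamma^*$ iff $\neg\neg(A \to B) \in \Gamma^*$ iff $k' \models_W A \to B$ iff $k \models_W A \to B$), whereas you unfold the disjunctive equivalent $\neg A \lor \neg\neg B$ directly; both arguments come to the same thing.
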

\begin{proof}
  Both by induction. Use the properties of $\Gamma^*$. (i) The base case is trivial. For ($\land$), use that $\neg \neg (A \land B) \dashv \vdash_{\SF} \neg \neg A \land \neg \neg B$. ($\lor$) is similar. For ($\to$), use that $\neg \neg (A \to B) \dashv \vdash_{\SF} \neg A \lor \neg \neg B$. (ii) The base case, ($\land$) and ($\lor$) are easy. For ($\to$), appeal to (i).
\end{proof}
\noindent The completeness is now established by appealing to the 2-node model.
\begin{proposition}[Completeness]
  $\Gamma \models_{\mathcal{W}}^V \Delta$ implies $\Gamma \vdash_{\SF} \Delta$.
\end{proposition}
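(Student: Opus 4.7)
The plan is to prove the contrapositive: assuming $\Gamma \not\vdash_{\SF} \Delta$, I will exhibit a two-node strict finitistic model that refutes $\Gamma \models^V \Delta$, and then invoke proposition \ref{proposition: Strong contraction} to conclude $\Gamma \not\models_{\mathcal{W}}^V \Delta$. All the heavy lifting has in fact already been done: the extension lemma furnishes a prime $\Gamma^* \supseteq \Gamma$ with respect to $\Fm(\Gamma \cup \Delta)$ for which $\langle \Gamma^*, \Delta \rangle$ is consistent, and the truth lemma attaches to $\Gamma^*$ the 2-node model $W = \langle \{k,k'\}, \leq, v \rangle$ already constructed in the excerpt.

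First I would confirm that $W$ really lies in $\mathcal{W}_2$. Since $\Var(\Gamma \cup \Delta)$ is finite, only finitely many variables are forced at $k$ or $k'$, so the finite verification condition holds. The atomic prevalence condition reduces to checking that whenever $k \in v(p)$, also $k' \in v(p)$: this follows from $A \vdash_{\SF} \neg\neg A$ (noted in the excerpt), because $p \in \Gamma^*$ gives $\Gamma^* \vdash_{\SF} \neg\neg p$ and hence $\neg \neg p \in \Gamma^*$ by primeness.

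Next I invoke the truth lemma. Since $\Gamma \subseteq \Gamma^* \subseteq \Fm(\Gamma \cup \Delta)$, clause (ii) of the truth lemma yields $k \models_W A$ for every $A \in \Gamma$. Conversely, for any $B \in \Delta$, the consistency of $\langle \Gamma^*, \Delta \rangle$ forces $B \notin \Gamma^*$ (otherwise the singleton witness $\{B\} \vdash_{\SF} \{B\}$ would violate consistency), so $k \not\models_W B$ by the same clause. Thus $k$ witnesses $\Gamma \not\models_W^V \Delta$, which gives $\Gamma \not\models_{\mathcal{W}_2}^V \Delta$, and proposition \ref{proposition: Strong contraction} lifts this to $\Gamma \not\models_{\mathcal{W}}^V \Delta$.

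Since every technical ingredient is in place, there is no genuine obstacle; the only points that deserve a careful word are (a) verifying the atomic prevalence condition on $W$, and (b) choosing the root $k$ rather than $k'$ as the refuting node, so that clause (ii) of the truth lemma (rather than clause (i), which speaks of $\neg \neg A \in \Gamma^*$) is the one that does the work.
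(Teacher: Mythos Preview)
Your proof is correct and follows essentially the same route as the paper's: contraposition, extension lemma, the 2-node model $W$, and the truth lemma applied at $k$. The only superfluous step is the appeal to proposition \ref{proposition: Strong contraction}: since $\mathcal{W}_2 \subseteq \mathcal{W}$, exhibiting a countermodel in $\mathcal{W}_2$ already shows $\Gamma \not\models_{\mathcal{W}}^V \Delta$ directly.
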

\begin{proof}
  Contraposition. By the preceding lemmas, we can assure there is a $W \in \mathcal{W}_2$ with $k \models_W A$ for all $A \in \Gamma$, and $k \not \models_W B$ for all $B \in \Delta$.
\end{proof}

\subsection{Relations with intermediate logics}\label{section: Relations with intermediate logics}
In this section we will compare \textbf{SF} with intermediate logics. We will establish that $\HT \subsetneq \textbf{SF} \subsetneq \textbf{CPC}$; and then show how strict finitistic models can be seen as nodes of an intuitionistic model. \textbf{HT} is known to be characterised by the class of the 2-node intuitionistic models, which we will denote by $\mathcal{W}_{2i}$. We use $\Vdash$ for the intuitionistic forcing relation. 

In comparing with \textbf{CPC} and \textbf{HT}, we will look at $\mathcal{W}_2$ (cf. proposition \ref{proposition: Strong contraction}). Since all $W \in \mathcal{W}_2$ have the isomorphic frames, we fix one, $F := {\langle \{k, k'\}, \leq \rangle}$ with $k < k'$, and consider the class $\mathcal{V}_2$ of the valuations on $F$. If a model $W = \langle \{k, k'\}, \leq, v \rangle$ is considered, we will write $\models_v^V A$, etc. instead of $\models_W^V A$, etc. We will also be writing $\models_{\mathcal{V}_2}^V A$ etc. A formula is \textit{stable in a $v \in \mathcal{V}_2$} if it is stable in the model $\langle \{ k, k'\}, \leq, v \rangle$. Stability in $\mathcal{V}_2$ is understood accordingly.

It is easy to see $\textbf{SF} \subsetneq \textbf{CPC}$: indeed, all rules of \textbf{SF} are admissible in \textbf{LK}, while $\not \vdash_{\SF} A \lor \neg A$ by soundness. But further we can establish a sufficient condition for a classical theorem to be a theorem of $\SF$. Completeness of \textbf{SF} implies that $A$ is stable in $\mathcal{V}_2$ iff $\neg \neg A \vdash_{\SF} A$. For each $W = \langle K, \leq, v \rangle \in \mathcal{W}$, we write $\cl(v)$ for the \textit{classical part of $v: \Fm \to \{ 0, 1 \}$} defined by $\cl(v)(p) = 1$ iff $\models_W^P p$. By $\vdash_{\textbf{X}} A$, we mean that $A$ is a theorem of an intermediate logic $\textbf{X}$.
\begin{proposition}
  If $\vdash_{\CPC} A$ and $A$ is stable in $\mathcal{V}_2$, then $\vdash_{\SF} A$.
\end{proposition}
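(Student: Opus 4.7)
The plan is to reduce $\vdash_{\SF} A$ to $\vdash_{\SF} \neg\neg A$ via the stability hypothesis, and to establish $\vdash_{\SF} \neg\neg A$ from the classical provability of $A$ through the semantic bridge connecting $\CPC$ with assertibility in $\mathcal{W}$.

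First, I would unpack what stability in $\mathcal{V}_2$ means syntactically. By the model contraction result (proposition \ref{proposition: Strong contraction}) and completeness of $\SF$, semantic consequence in $\mathcal{W}$ collapses to semantic consequence over the two-node models, i.e.\ over $\mathcal{V}_2$. Hence $A$ being stable in $\mathcal{V}_2$ -- meaning $\neg\neg A \models_{\mathcal{V}_2}^V A$ -- is equivalent to $\neg\neg A \models_\mathcal{W}^V A$, which by soundness and completeness of $\SF$ (proposition \ref{proposition: Soundness} and the completeness proposition) is equivalent to $\neg\neg A \vdash_{\SF} A$. So the hypothesis directly gives me a derivation of $\neg\neg A$ from $A$ in the sequent calculus.

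Second, I would show $\vdash_{\SF} \neg\neg A$ from $\vdash_{\CPC} A$. By the corollary identifying assertibility in $\mathcal{W}$ with $\CPC$ (corollary \ref{corollary: Assertibility is CPC}), classical provability of $A$ yields $\models^A_\mathcal{W} A$, i.e.\ every $W \in \mathcal{W}$ has some node forcing $A$. The full prevalence property (proposition \ref{proposition: Full prevalence property}) then promotes assertibility to prevalence: for every $W$ and every node $k$, some $k' \geq k$ forces $A$. Reading this through the clause for $\to$ with consequent $\bot$ gives $\models_W^V \neg\neg A$, so $\models_\mathcal{W}^V \neg\neg A$. By completeness of $\SF$ I obtain $\vdash_{\SF} \neg\neg A$.

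Finally, I would combine the two by an application of (Cut) -- or equivalently by noting that $\vdash_{\SF}$ is closed under cut with a derivable antecedent: from $\vdash_{\SF} \neg\neg A$ and $\neg\neg A \vdash_{\SF} A$ we conclude $\vdash_{\SF} A$. The proof is thus essentially a three-line chain once the two preparatory observations are in place. The main obstacle, and really the only nontrivial step, is the second one: extracting a syntactic derivation of $\neg\neg A$ in $\SF$ from the mere classical provability of $A$, which requires the chain corollary \ref{corollary: Assertibility is CPC} $\Rightarrow$ prevalence $\Rightarrow$ $\models^V \neg\neg A$ $\Rightarrow$ completeness, none of whose ingredients is hard individually, but whose combined deployment is what makes the proposition actually useful as a sufficient criterion for $\SF$-theoremhood.
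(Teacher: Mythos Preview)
Your overall strategy matches the paper's: obtain $\vdash_{\SF} \neg\neg A$ from $\vdash_{\CPC} A$, then cut against $\neg\neg A \vdash_{\SF} A$ supplied by stability (the paper records just before the proposition that stability in $\mathcal{V}_2$ is equivalent to $\neg\neg A \vdash_{\SF} A$, exactly as you derive). However, your second step introduces a circularity. You invoke corollary~\ref{corollary: Assertibility is CPC} to pass from $\vdash_{\CPC} A$ to $\models_\mathcal{W}^A A$, but that corollary is stated \emph{as a corollary of this very proposition}, and its proof (``Use that $\models_\mathcal{W}^A A$ iff $\vdash_{\SF} \neg\neg A$; and $\neg\neg A \in \ST$'') needs the present proposition applied to $\neg\neg A$ in order to go from $\vdash_{\CPC} \neg\neg A$ to $\vdash_{\SF} \neg\neg A$. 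So as written your argument is circular.

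The repair is easy and is what the paper does: bypass the corollary and argue semantically. From $\vdash_{\CPC} A$ one has $\cl(v)(A) = 1$ for every $v \in \mathcal{V}_2$; equivalently, by the assertibility calculation (proposition~\ref{proposition: Assertibility and validity calculations}(i), which the paper notes is carried over to $\mathcal{W}$), $\models_v^A A$ for every $v \in \mathcal{V}_2$, hence $\models_{\mathcal{V}_2}^V \neg\neg A$. Completeness then yields $\vdash_{\SF} \neg\neg A$ directly, with no appeal to the corollary, and the rest of your argument goes through unchanged. (A minor slip: in your first paragraph you wrote ``a derivation of $\neg\neg A$ from $A$'' where you meant the converse.)
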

\begin{proof}
  If $\vdash_{\CPC} A$, then $\cl(v)(A) = 1$ for all $v \in \mathcal{V}_2$. Therefore $\models_{\mathcal{V}_2}^V \neg \neg A$, and hence $\vdash_{\SF} \neg \neg A$ by completeness of \textbf{SF}. Then $\vdash_{\SF} A$, since $A$ is stable.
\end{proof}
\begin{corollary}\label{corollary: Assertibility is CPC}
  $\models_\mathcal{W}^A A$ iff $\vdash_{\CPC} A$.
\end{corollary}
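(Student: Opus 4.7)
The plan is to exploit the fact that the assertibility relation in each individual $W \in \mathcal{W}$ is computed exactly like a classical truth-value assignment. First, I would note that the assertibility calculation rules of proposition \ref{proposition: Assertibility and validity calculations}(i) carry over from $S_\infty$ to any $W \in \mathcal{W}$ (as is guaranteed by the full prevalence property holding in $\mathcal{W}$): $\models_W^A A \land B$ iff $\models_W^A A$ and $\models_W^A B$; $\models_W^A A \lor B$ iff $\models_W^A A$ or $\models_W^A B$; and $\models_W^A A \to B$ iff $\not \models_W^A A$ or $\models_W^A B$. Define a classical valuation $v_W : \Var \to \{0,1\}$ by $v_W(p) = 1$ iff $\models_W^A p$; a straightforward induction on $A$ then gives that $v_W$ satisfies $A$ classically iff $\models_W^A A$, with $\bot$ handled by the fact that no node forces $\bot$.

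For $(\Leftarrow)$, if $\vdash_{\CPC} A$ then $v_W$ classically satisfies $A$ for every $W \in \mathcal{W}$, so $\models_W^A A$ for every $W$; hence $\models_\mathcal{W}^A A$. For $(\Rightarrow)$ I argue contrapositively: if $\not \vdash_{\CPC} A$, pick a classical valuation $v$ with $v(A) = 0$ and construct a single-node model $W = \langle \{ k \}, \leq, v' \rangle$ with $k \in v'(p)$ iff $p \in \Var(A)$ and $v(p) = 1$, and $v'(p) = \emptyset$ otherwise. This $W$ lies in $\mathcal{W}$: the frame is trivially finitely branching with a single (length-one) branch, only finitely many atoms are forced at $k$ since $\Var(A)$ is finite, and atomic prevalence is vacuous on a singleton. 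Since $v_W$ agrees with $v$ on $\Var(A)$, the preceding observation yields $\not \models_W^A A$, so $\not \models_\mathcal{W}^A A$.

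No serious obstacle is expected; the corollary essentially lifts the classical character of assertibility established for $S_\infty$ in proposition \ref{proposition: Assertibility and validity calculations} to the schematic setting of $\mathcal{W}$, and the only real work is the one-step classical-valuation induction together with the routine check that the single-node model meets the strict finitistic conditions.
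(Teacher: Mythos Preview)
Your proof is correct, but it takes a different route from the paper's. The paper argues via the sequent calculus $\SF$: it uses that $\models_\mathcal{W}^A A$ iff $\models_\mathcal{W}^V \neg\neg A$ iff (by soundness and completeness of $\SF$) $\vdash_{\SF} \neg\neg A$; then for $(\Leftarrow)$ it applies the immediately preceding proposition (if $\vdash_{\CPC} B$ and $B$ is stable in $\mathcal{V}_2$, then $\vdash_{\SF} B$) to $B := \neg\neg A$, which is stable because $\neg\neg A \in \ST$; and for $(\Rightarrow)$ it uses that every $\SF$-derivable sequent is $\LK$-derivable, so $\vdash_{\SF} \neg\neg A$ gives $\vdash_{\CPC} \neg\neg A$ and hence $\vdash_{\CPC} A$. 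Your argument is more direct and purely semantic: you lift the assertibility calculation of proposition \ref{proposition: Assertibility and validity calculations}(i) to arbitrary $W \in \mathcal{W}$, identify $\models_W^A$ with a classical valuation $v_W$, and for the converse build a single-node model from a falsifying classical valuation. Your approach has the advantage of not relying on the completeness theorem for $\SF$ (so the corollary could in principle be placed earlier, right after the full prevalence property is established for $\mathcal{W}$); the paper's approach is shorter given the proof-theoretic apparatus already in hand and makes explicit the link to the stability of $\neg\neg A$.
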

\begin{proof}
  Use that $\models_\mathcal{W}^A A$ iff $\vdash_{\SF} \neg \neg A$; and $\neg \neg A \in \ST$.
\end{proof}

Next, we will see that $\HT \subsetneq \SF$. $\SF \not \subseteq \HT$ is plain, since Peirce's law does not belong to \textbf{HT}. To show $\HT \subseteq \SF$, we will appeal to the intuitionistic semantics. Let $\mathcal{V}_{2i}$ be the class of the intuitionistic valuations on $F$. Then $\mathcal{V}_2 \subseteq \mathcal{V}_{2i}$. Therefore it makes sense to intuitionistically evaluate a formula in $F$ with a strict finitistic valuation $v \in \mathcal{V}_2$. We may write $k \Vdash_v A$ etc. similarly in this context. The only difference in the definition of $\Vdash$ is that $l \Vdash_v B \to C$ iff for all $l' \geq l$, $l' \Vdash_v B$ implies $l' \Vdash_v C$. Validity is defined in the same way as $\models_v$; and we will write $\Vdash_v^V A$ etc. In this setting, to say $\HT$ is characterised by $\mathcal{W}_{2i}$ is to say that for all $A$, $\vdash_{\HT} A$ iff $\Vdash_{\mathcal{V}_{2i}}^V A$. We note that therefore $\vdash_{\HT} A$ implies $\Vdash_{\mathcal{V}_2}^V A$, since $\mathcal{V}_2 \subseteq \mathcal{V}_{2i}$.
\begin{proposition}
  (i) $k' \Vdash_v A$ iff $k' \models_v A$. (ii) $k \Vdash_v A$ implies $k \models_v A$. (iii) $\Vdash_{\mathcal{V}_2}^V A$ implies $\models_{\mathcal{V}_2}^V A$.
\end{proposition}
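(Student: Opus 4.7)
My plan is to prove (i) by induction on $A$, then (ii) by a separate induction using (i), and deduce (iii) as an immediate corollary. The atomic, conjunction, and disjunction clauses of $\Vdash_v$ and $\models_v$ coincide at every node, so only the implication case requires genuine work.

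For (i), the point is that at the top node $k'$ there is no strict successor. The intuitionistic clause ``for all $l' \geq k'$, $l' \Vdash_v B$ implies $l' \Vdash_v C$'' collapses to ``$k' \Vdash_v B$ implies $k' \Vdash_v C$''. Likewise the strict finitistic clause ``for all $l' \geq k'$, $l' \models_v B$ implies some $l'' \geq l'$ satisfies $l'' \models_v C$'' collapses to ``$k' \models_v B$ implies $k' \models_v C$'', since the only $l''$ available is $k'$ itself. Applying the induction hypothesis to $B$ and $C$ closes the implication case.

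For (ii), I proceed by induction on $A$, now with (i) in hand. The only non-routine case is $A = B \to C$. Assume $k \Vdash_v B \to C$; I must verify two things for $k \models_v B \to C$: (a) if $k \models_v B$ then some $l'' \geq k$ satisfies $l'' \models_v C$, and (b) if $k' \models_v B$ then $k' \models_v C$. Part (b) is straightforward: by (i), $k' \models_v B$ gives $k' \Vdash_v B$; by persistence of $\Vdash_v$ we have $k' \Vdash_v B \to C$, hence $k' \Vdash_v C$; applying (i) again yields $k' \models_v C$. Part (a) is the main obstacle: the strict finitistic clause at $k$ is strictly weaker than the intuitionistic one, so we cannot reduce $k \models_v B$ to $k \Vdash_v B$ via the induction hypothesis (whose direction is the wrong one). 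The resolution is the full prevalence property (proposition \ref{proposition: Full prevalence property}), which holds because $v \in \mathcal{V}_2$ satisfies the atomic prevalence condition. If $k \models_v B$, then $B$ is assertible in the model, hence prevalent; evaluated at $k'$, prevalence forces $k' \models_v B$, and then the argument of part (b) delivers $k' \models_v C$, so $l'' := k'$ witnesses (a).

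Finally, (iii) follows without further work: if $A$ is intuitionistically valid on every $v \in \mathcal{V}_2$, then both $k \Vdash_v A$ and $k' \Vdash_v A$ for each such $v$; applying (ii) at $k$ and (i) at $k'$ gives $k \models_v A$ and $k' \models_v A$, i.e.\ $\models_v^V A$. The only real hurdle is thus the implication step of (ii), where the atomic prevalence condition built into $\mathcal{V}_2$ is indispensable for bridging satisfaction at $k$ to satisfaction at $k'$.
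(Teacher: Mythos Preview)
Your proof is correct, and its overall shape matches the paper's: induction for (i) and (ii), with (iii) immediate. The implication case of (i) is exactly right, and so is your part (b) in (ii).

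However, your treatment of part (a) in the implication case of (ii) takes an unnecessary detour, and your concluding claim that the atomic prevalence condition is ``indispensable'' is incorrect. From $k \models_v B$ you obtain $k' \models_v B$ by invoking the full prevalence property; but persistence of $\models_v$ already gives $k' \models_v B$ directly, since $k \leq k'$. Once you have $k' \models_v B$, your own argument for (b) yields $k' \models_v C$, and $l'' := k'$ witnesses (a). This is precisely what the paper's terse hint ``Use persistence and (i)'' is pointing at. In particular, the argument for (i)--(iii) goes through for any upward-closed valuation on the two-node frame, not only for those satisfying atomic prevalence; the restriction to $\mathcal{V}_2$ is needed elsewhere (to connect to strict finitistic validity and hence to $\SF$), not in this proposition itself.
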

\begin{proof}
  (i, ii) by induction. (i) is plain since both evaluations are classical. (ii) ($\to$) Use persistence and (i). (iii) Immediate from (ii).
\end{proof}

\begin{proposition}\label{proposition: HT subseteq SF}
  $\HT \subseteq \SF$.
\end{proposition}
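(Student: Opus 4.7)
The plan is to chain together the characterisation of $\HT$ by $\mathcal{W}_{2i}$, the preceding proposition relating $\Vdash$ and $\models$ on the common frame $F$, the model contraction result (proposition \ref{proposition: Model contraction}), and completeness of $\SF$.

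More concretely, I would fix an $A$ with $\vdash_{\HT} A$ and argue as follows. First, by the characterisation of $\HT$ stated earlier, $\vdash_{\HT} A$ gives $\Vdash_{\mathcal{V}_{2i}}^V A$, and since $\mathcal{V}_2 \subseteq \mathcal{V}_{2i}$ this specialises to $\Vdash_{\mathcal{V}_2}^V A$. Next, I would invoke clause (iii) of the proposition just above, which converts intuitionistic validity into strict finitistic validity on the 2-node frame: $\Vdash_{\mathcal{V}_2}^V A$ implies $\models_{\mathcal{V}_2}^V A$, i.e.\ $\models_{\mathcal{W}_2}^V A$. Then proposition \ref{proposition: Model contraction}(iii) lifts validity from $\mathcal{W}_2$ to the full class $\mathcal{W}$: $\models_\mathcal{W}^V A$. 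Finally, completeness of $\SF$ (proposition following the truth lemma) yields $\vdash_{\SF} A$, which is what we want.

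Since every step in the chain has already been proved in the excerpt, the argument is essentially a one-line composition: $\vdash_{\HT} A \Rightarrow \Vdash_{\mathcal{V}_2}^V A \Rightarrow \models_{\mathcal{V}_2}^V A \Rightarrow \models_\mathcal{W}^V A \Rightarrow \vdash_\SF A$. I do not expect any obstacle here, as the genuine work was done in the previous proposition (comparing the clauses for implication on the 2-node frame under intuitionistic versus strict finitistic semantics) and in the model contraction result. The only thing worth checking carefully is that $\mathcal{V}_2 \subseteq \mathcal{V}_{2i}$ is indeed correct, i.e.\ that the valuations we call ``strict finitistic'' on $F$ are a subclass of the intuitionistic valuations on $F$ — this holds because the strict finitistic conditions (finite verification, atomic prevalence) only restrict which valuations are admitted, and upward closure of $v(p)$ is already built into the definition of a strict finitistic model. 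Together with the strict inclusion noted via Peirce's law ($\SF \not\subseteq \HT$), this will establish the strict containment $\HT \subsetneq \SF$ promised in the introduction.
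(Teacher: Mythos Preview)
Your proposal is correct and follows essentially the same route as the paper's proof, which simply reads ``Use that $\vdash_{\HT} A$ implies $\models_{\mathcal{V}_2}^V A$, and apply completeness of \textbf{SF}.'' The only difference is that you make the passage through proposition \ref{proposition: Model contraction}(iii) explicit, whereas the paper leaves it implicit (the completeness argument already produces a countermodel in $\mathcal{W}_2$, so $\models_{\mathcal{V}_2}^V A$ suffices directly); this is a cosmetic difference, not a substantive one.
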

\begin{proof}
  Use that $\vdash_{\HT} A$ implies $\models_{\mathcal{V}_2}^V A$, and apply completeness of \textbf{SF}.
\end{proof}

Finally, we will show how strict finitistic finite models can be viewed as nodes of an intuitionistic model that satisfies the finite verification condition. Let $\mathcal{W}_{\fin} \subseteq \mathcal{W}$ be the class of the finite models, $\mathcal{U} \subseteq \mathcal{W}_{\fin}$ be at most countable, $\preceq \, \subseteq \mathcal{U}^2$ and for each $W \in \mathcal{U}$, $W = \langle K_W, \leq_W, v_W \rangle$. Then $\preceq$ is a \textit{generation order on $\mathcal{U}$} if $W_1 \preceq W_2$ iff (i) $K_{W_1} \subseteq K_{W_2}$, (ii) for all $k, k' \in K_{W_1}$, $k \leq_{W_1} k'$ iff $k \leq_{W_2} k'$ and (iii) $v_{W_1}(p) \subseteq v_{W_2}(p)$. Plainly, a generation order is a partial order. $\langle \mathcal{U}, \preceq \rangle$ is a \textit{generation structure} (\textit{g-structure}) if it is an intuitionistic frame and $\preceq$ is a generation order. We only consider rooted tree-like intuitionistic frames. Let $\mathcal{G}$ be the class of all g-structures. For any $G = \langle \mathcal{U}, \preceq \rangle \in \mathcal{G}$, we write $R_G \, (\in \mathcal{U})$ for $G$'s root; $r_G \, (\in K_{R_G})$ for $R_G$'s root; and $K_G$ for $\{ \langle W, k \rangle \mid W \in \mathcal{U} \land k \in K_W \} $.

A g-structure $\langle \mathcal{U}, \preceq \rangle$ specifies how an agent's cognitive abilities increase, and represents how their actual verification of basic facts, such as concrete equations, proceeds under the increasement. Each $W \in \mathcal{U}$ typically represents the same agent from different generations, and a later generation is associated with a larger amount power and more verification steps.

Given a $G = \langle \mathcal{U}, \preceq \rangle \in \mathcal{G}$, we induce a mapping $v_G: \Var \to \mathcal{P}(K_G)$ from the valuations of $\mathcal{U}$'s elements: $\langle W, k \rangle \in v_G(p)$ iff $k \in v_W(p)$. We can show $v_G(p)$ is closed upwards with respect to both $G$ and each $W \in \mathcal{U}$.
\begin{proposition}
  If $\langle W, k \rangle \in v_G(p)$, then $\langle W', k' \rangle \in v_G(p)$ for all $W' \succeq W$ and $k' \, (\in K_{W'}) \geq_{W'} k$.
\end{proposition}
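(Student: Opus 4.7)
The proof should be a direct unpacking of the relevant definitions, so my plan is to chain three facts together in sequence rather than to attempt any induction.

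First I would translate the hypothesis. By the definition of $v_G$, the assumption $\langle W, k \rangle \in v_G(p)$ just says $k \in v_W(p)$. Similarly, what we want to conclude, $\langle W', k' \rangle \in v_G(p)$, reduces to $k' \in v_{W'}(p)$, provided that $k' \in K_{W'}$ (which is already given by the assumption $k' \in K_{W'}$ and $k' \geq_{W'} k$). So the whole task is to show $k' \in v_{W'}(p)$ from $k \in v_W(p)$, $W \preceq W'$, and $k' \geq_{W'} k$.

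Next I would invoke the generation-order clauses. From $W \preceq W'$, clause (iii) of the generation-order definition gives $v_W(p) \subseteq v_{W'}(p)$, which upgrades $k \in v_W(p)$ to $k \in v_{W'}(p)$. Note that clause (i), $K_W \subseteq K_{W'}$, ensures that $k$ is a legitimate node of $W'$, so the inclusion is meaningful and the relation $k' \geq_{W'} k$ makes sense.

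Finally I would use that $W' = \langle K_{W'}, \leq_{W'}, v_{W'} \rangle$ is itself a strict finitistic model, hence in particular an intuitionistic model, so $v_{W'}(p)$ is upward closed with respect to $\leq_{W'}$. From $k \in v_{W'}(p)$ and $k' \geq_{W'} k$, we conclude $k' \in v_{W'}(p)$, and unfolding the definition of $v_G$ again yields $\langle W', k' \rangle \in v_G(p)$.

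The only thing to watch is bookkeeping: one must be careful to use clause (i) of the generation order (not merely clause (iii)) to transfer $k$ into $K_{W'}$ before applying upward closure there. There is no real obstacle; the statement is essentially an observation that $v_G$ inherits upward closure simultaneously from the upward closure of each $v_W$ in its own frame and from the monotonicity condition $v_W(p) \subseteq v_{W'}(p)$ built into the definition of $\preceq$.
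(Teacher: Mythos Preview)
Your proposal is correct and follows essentially the same approach as the paper, which simply cites ``persistence of $v_W$ in each $W$ and definition of $v_G$.'' Your version is more explicit in separating out the role of the generation-order clauses (i) and (iii) from the upward closure of $v_{W'}$, but the underlying argument is identical.
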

\begin{proof}
  By persistence of $v_W$ in each $W$ and definition of $v_G$.
\end{proof}
\noindent We define the \textit{generation forcing relation} $\Vvdash_G \, \subseteq K_G \times \Fm$ extending $v_G$ as follows.
\begin{enumerate}
  \item $W, k \Vvdash_G p$ iff $\langle W, k \rangle \in v_G(p)$;
  \item $W, k \Vvdash_G A \land B$ iff $W, k \Vvdash_G A$ and $W, k \Vvdash_G B$;
  \item $W, k \Vvdash_G A \lor B$ iff $W, k \Vvdash_G A$ or $W, k \Vvdash_G B$; and
  \item $W, k \Vvdash_G A \to B$ iff for any $W' \succeq W$ and $k' \, (\in K_{W'}) \geq_{W'} k$, if $W', k' \Vvdash_G A$, then there is a $k'' \, (\in K_{W'}) \geq_{W'}    k'$ such that $W', k'' \Vvdash_G B$.
\end{enumerate}
$\Vvdash_G$ thus defined formalises the agent's actual verification under the increasing power. $W, k \Vvdash_G A \to B$ means that if $A$ is verified with some extension in power, then so is $B$ with the same power; $W, k \Vvdash_G \neg A$ means $A$ is practically unverifiable with any extension. If $A$ is atomic, $W, k \Vvdash_G A$ means that the agent has verified $A$ by, and knows that $A$ at step $k$ of generation $W$. Otherwise, it only reflects our judgement.

$A \in \Fm$ is \textit{valid in a $G = \langle \mathcal{U}, \preceq \rangle \in \mathcal{G}$} if $W, k \Vvdash_G A$ for all $\langle W, k \rangle \in K_G$. We write $\Vvdash_G^V A$ for this; and $\Vvdash_{\mathcal{G}}^V A$ for that $\Vvdash_G^V A$ for all $G \in \mathcal{G}$. One can verify by induction that $\Vvdash_G$ persists both in $G$ and each $W$ just as $v_G$. Therefore $\Vvdash_G^V A$ iff $R_G, r_G \Vvdash_G A$. Since each $W$ is a strict finitistic model, we have the prevalence property of $\Vvdash_G$ inside of them.
\begin{proposition}[Prevalence in generation]
  If $W, k \Vvdash_G A$ for some $k \in K_W$, then for any $l \in K_W$, there is an $l' \, (\in K_W) \geq_W l$ such that $W, l' \Vvdash_G A$.
\end{proposition}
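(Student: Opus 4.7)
The plan is to proceed by induction on the structure of $A$, paralleling the reasoning behind the full prevalence property (Proposition \ref{proposition: Full prevalence property}), but with extra bookkeeping for the generation quantifier that appears in the implication clause of $\Vvdash_G$.

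The base cases are direct: $A = \bot$ is vacuous, and for atomic $A = p$, the hypothesis $W, k \Vvdash_G p$ unfolds to $k \in v_W(p)$, whereupon the atomic prevalence condition built into the strict finitistic model $W$ yields, for each $l \in K_W$, some $l' \geq_W l$ with $l' \in v_W(p)$, i.e.\ $W, l' \Vvdash_G p$. Conjunction is handled by first applying the IH for $A$ from $l$ to obtain $l_1 \geq_W l$ with $W, l_1 \Vvdash_G A$, then applying the IH for $B$ (using the given witness $k$) from $l_1$ to obtain $l' \geq_W l_1$ with $W, l' \Vvdash_G B$; persistence of $\Vvdash_G$ inside $W$, noted immediately before the proposition, preserves $A$ at $l'$, yielding $W, l' \Vvdash_G A \land B$. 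Disjunction is immediate after picking whichever disjunct $W, k$ forces.

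The main obstacle is the implication case. Here I plan to prove the stronger claim that $W, l \Vvdash_G A \to B$ holds for every $l \in K_W$, so that the required $l'$ may simply be taken to be $l$ itself. Fix arbitrary $l \in K_W$, $W' \succeq W$, and $l'' \geq_{W'} l$ with $W', l'' \Vvdash_G A$; I must produce $l''' \geq_{W'} l''$ with $W', l''' \Vvdash_G B$. Since $\preceq$ is a generation order, $k \in K_W \subseteq K_{W'}$, so the inductive hypothesis for $A$ inside $W'$, applied with witness $l''$ and starting node $k$, yields some $k' \geq_{W'} k$ with $W', k' \Vvdash_G A$. The assumption $W, k \Vvdash_G A \to B$, instantiated at $W'$ and $k'$, then delivers $k'' \geq_{W'} k'$ with $W', k'' \Vvdash_G B$. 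A final application of the inductive hypothesis for $B$ inside $W'$, now with witness $k''$ and starting node $l''$, produces the required $l''' \geq_{W'} l''$ with $W', l''' \Vvdash_G B$. The subtle point is that the inductive hypothesis must be read as holding uniformly across all members of $\mathcal{U}$, not only for the original $W$; this is legitimate because the induction is on formula complexity and the statement is implicitly universal in the underlying strict finitistic model.
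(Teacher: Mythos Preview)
Your proof is correct and follows the same approach the paper indicates: induction on $A$, with persistence of $\Vvdash_G$ inside each $W$ as the key auxiliary tool. The paper's proof is extremely terse (``Induction on $A$. Use persistence of $\Vvdash_G$.''), and you have filled in exactly the details one would expect, including the only nontrivial step---the implication case---where the inductive hypothesis must be invoked inside the larger model $W'$ rather than the original $W$; your remark that this is legitimate because the statement is universally quantified over all models in $\mathcal{U}$ is the right justification.
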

\begin{proof}
  Induction on $A$. Use persistence of $\Vvdash_G$.
\end{proof}

\begin{corollary}\label{corollary: Assertibility of implication in g-structure is validity}
  $W, r_G \Vvdash_G A \to B$ iff $W, k \Vvdash_G A \to B$ for some $k \in K_W$.
\end{corollary}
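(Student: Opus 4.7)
The plan is to dispose of the forward direction immediately and direct the main effort at the backward direction. For the forward direction, note that since $R_G$ is the root of $G$ we have $R_G \preceq W$, hence $r_G \in K_{R_G} \subseteq K_W$, so taking $k := r_G$ gives the desired instance of the right-hand side.

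For the backward direction, suppose $W, k \Vvdash_G A \to B$ for some $k \in K_W$, and aim to verify $W, r_G \Vvdash_G A \to B$. Unfolding the clause for $\to$, I would fix an arbitrary $W' \succeq W$ and $l \in K_{W'}$ with $l \geq_{W'} r_G$ and $W', l \Vvdash_G A$, and look for $l'' \geq_{W'} l$ such that $W', l'' \Vvdash_G B$. The first step is to persist the hypothesis up the generation order: since $\Vvdash_G$ persists in $G$ and $k \in K_W \subseteq K_{W'}$, I obtain $W', k \Vvdash_G A \to B$. The second step is to apply Prevalence in generation inside $W'$ to the formula $A$: from $W', l \Vvdash_G A$ it follows that for every node of $W'$ there is a node above it forcing $A$, so in particular some $m \in K_{W'}$ with $m \geq_{W'} k$ and $W', m \Vvdash_G A$. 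Plugging $m$ into the implication condition for $W', k \Vvdash_G A \to B$ (instantiated with $W' \succeq W'$) yields some $m' \geq_{W'} m$ with $W', m' \Vvdash_G B$. The third and final step is a second application of Prevalence in generation inside $W'$, now to $B$: from $W', m' \Vvdash_G B$ I extract $l'' \geq_{W'} l$ with $W', l'' \Vvdash_G B$, which closes the argument.

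The only mildly delicate point is that in the tree $W'$ the nodes $k$ and $l$ need not be comparable, so the witness $m'$ obtained from the implication hypothesis sits above $k$ but generally not above $l$. This is precisely the obstacle that Prevalence in generation resolves: I use it once to push the verification of $A$ above $k$ (so the implication clause can fire), and then once more to push the resulting verification of $B$ above $l$ (to meet the required target). No further combinatorial manipulation of the frames is needed.
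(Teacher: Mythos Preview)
Your proof is correct and follows the same approach the paper intends: the paper's one-line proof ``Follows from prevalence in generation'' is precisely the argument you spell out, using prevalence twice inside $W'$ (once to push $A$ above $k$, once to push $B$ above $l$). One small simplification: you need not invoke persistence to obtain $W', k \Vvdash_G A \to B$ first; you can instantiate the hypothesis $W, k \Vvdash_G A \to B$ directly with $W'' := W' \succeq W$ and the node $m \geq_{W'} k$, which already yields the required $m'$.
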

\begin{proof}
  ($\impliedby$) Follows from prevalence in generation.
\end{proof}
\noindent We note that $\models_W^P$ where $W \in \mathcal{U}$ does not persist in $G$. One may be too powerless to verify ($\models_W^P \neg p$), but may verify later ($\models_{W'}^P p$ with $W \preceq W'$).

Let $\mathcal{J}$ denote the class of the intuitionistic models that satisfy the finite verification condition. We see that a $G = \langle \mathcal{U}, \preceq \rangle \in \mathcal{G}$ induces a corresponding $I \in \mathcal{J}$, and vice versa. Given $G$, we define a mapping $v: \Var \to \mathcal{P}(\mathcal{U})$ by $W \in v(p)$ iff there is a $k \in K_W$ such that $W, k \Vvdash_G p$; and let $I_G := \langle \mathcal{U}, \preceq, v \rangle$. Then $v(p)$ is closed upwards in $G$, and since each $W \in \mathcal{U}$ is finite, $I_G \in \mathcal{J}$ follows. So $W \Vdash_{I_G} A$ makes sense for each $A \in \Fm$ and $W \in \mathcal{U}$. We mean by $\Vdash_{I_G}^V A$ that $W \Vdash_{I_G} A$ for all $W \in \mathcal{U}$. $\Vdash_{I_G}$ persists in $G$, and therefore $\Vdash_{I_G}^V A$ iff $R_G \Vdash_{I_G} A$. The correspondence between $\Vdash_{I_G}$ and $\Vvdash_G$ can be generalised to all complex formulas.
\begin{proposition}\label{proposition: G to J transformation}
  $W \Vdash_{I_G} A$ iff there is a $k \in K_W$ such that $W, k \Vvdash_G A$.
\end{proposition}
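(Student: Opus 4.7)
The plan is to prove the biconditional by induction on the structure of $A$, quantifying over all $W \in \mathcal{U}$ so that the induction hypothesis is available at every $W' \succeq W$ for each subformula of $A$.

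For the base case with $A = p \in \Var$, the claim unfolds directly: $W \Vdash_{I_G} p$ iff $W \in v(p)$, which holds iff there is some $k \in K_W$ with $k \in v_W(p)$, i.e.\ $W, k \Vvdash_G p$. The case $A = \bot$ is trivial since both sides fail. For $A = B \land C$ or $A = B \lor C$, one direction is immediate from the induction hypothesis and the clauses for $\Vvdash_G$. For the reverse direction of conjunction, after using the induction hypothesis to obtain witnesses $k_1, k_2 \in K_W$ for $B$ and $C$ separately, I would invoke prevalence in generation (the proposition just above the statement) applied inside $W$ to promote $k_1$ to some $k \geq_W k_2$ with $W, k \Vvdash_G B$; persistence of $\Vvdash_G$ within $W$ then yields $W, k \Vvdash_G C$ as well.

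The implication case $A = B \to C$ is the main point. For ($\Leftarrow$), suppose $W, k \Vvdash_G B \to C$ for some $k \in K_W$; given $W' \succeq W$ with $W' \Vdash_{I_G} B$, the induction hypothesis produces some $l \in K_{W'}$ with $W', l \Vvdash_G B$, and prevalence in generation inside $W'$ promotes $l$ to some $l' \geq_{W'} k$ (recall $k \in K_W \subseteq K_{W'}$) with $W', l' \Vvdash_G B$. Then the clause for $\Vvdash_G A \to B$ at $\langle W, k \rangle$ delivers $l'' \geq_{W'} l'$ with $W', l'' \Vvdash_G C$, and a second application of the induction hypothesis yields $W' \Vdash_{I_G} C$. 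For ($\Rightarrow$), I would take $k = r_W$ as the witness and show $W, r_W \Vvdash_G B \to C$: given $W' \succeq W$ and $k' \geq_{W'} r_W$ with $W', k' \Vvdash_G B$, the induction hypothesis gives $W' \Vdash_{I_G} B$, hence $W' \Vdash_{I_G} C$ by assumption, hence some witness $m \in K_{W'}$ with $W', m \Vvdash_G C$; prevalence in generation inside $W'$ then relocates $m$ to some $k'' \geq_{W'} k'$ with $W', k'' \Vvdash_G C$.

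The only genuinely delicate point is the implication case, where one must juggle the two orderings $\preceq$ on $\mathcal{U}$ and $\leq_{W'}$ on $K_{W'}$ simultaneously; the technical device that makes everything go through is the prevalence-in-generation proposition, used inside each $W'$ to move witness nodes past a prescribed lower bound. No use of the (more delicate) corollary \ref{corollary: Assertibility of implication in g-structure is validity} is needed, since prevalence in generation suffices to manufacture witnesses wherever required.
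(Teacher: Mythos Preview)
Your proposal is correct and follows essentially the same inductive argument as the paper's proof, relying on prevalence in generation and persistence of $\Vvdash_G$ at the same points. The only cosmetic difference is that in the $(\Rightarrow)$ direction of the implication case the paper takes $r_G$ (the root of $R_G$, which lies in $K_W$ since $R_G \preceq W$) as the witness rather than your $r_W$; since the ensuing argument does not depend on which node of $K_W$ is chosen, both choices work equally well.
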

\begin{proof}
  Induction. ($\land$) ($\implies$) If $W \Vdash_{I_G} B \land C$, then $W, k \Vvdash_G B$ and $W, l \Vvdash_G C$ for some $k, l \in K_W$. So by prevalence and persistence, $W, k' \Vvdash_G B \land C$ for some $k' \in K_W$ such that $k \leq_W k'$.
  
  ($\to$) ($\implies$) Suppose $W \Vdash_{I_G} B \to C$. We will prove $W, r_G \Vvdash_G B \to C$. Let $W \preceq W'$, $k \in K_{W'}$ and $W', k \Vvdash_G B$. Then $W' \Vdash_{I_G} B$, and hence $W' \Vdash_{I_G} C$ by the supposition. Therefore $W', l \Vvdash_G C$ for some $l \in K_{W'}$. So by prevalence, $W', k' \Vvdash_G C$ for some $k' \geq_{W'} k$. ($\impliedby$) Let $W, k \Vvdash_G B \to C$, $W \preceq W'$ and $W' \Vdash_{I_G} B$. Then $W', l \Vvdash_G B$ for some $l \in K_{W'}$. Since $k \in K_{W'}$, $W', k' \Vvdash_G B$ for some $k' \geq_{W'} k$ by prevalence. So $W', k'' \Vvdash C$ for some $k'' \geq_{W'} k'$. Use the induction hypothesis.
\end{proof}
\noindent Conversely, let $I = \langle U^*, \preceq^*, v^* \rangle \in \mathcal{J}$ be given. Then, for each $U \in U^*$, we can obtain $W_U = \langle K_U, \leq_U, v_U \rangle \in \mathcal{W}_{\fin}$ by letting $K_U = \{ U' \in U^* \mid U' \preceq^* U \}$, $U' \leq_U U''$ iff $U' \preceq^* U''$ and $v_U(p) = v^*(p) \cap K_U$. Define $\mathcal{U} := \{ W_U \mid U \in U^* \}$; $\preceq \, \subseteq \mathcal{U}^2$ by $W_{U_1} \preceq W_{U_2}$ iff $U_1 \preceq^* U_2$; and $G_I := \langle \mathcal{U}, \preceq \rangle$. Then $G_I \in \mathcal{G}$. $I$ and $G_I$ correspond in the following sense.
\begin{proposition}\label{proposition: J to G transformation}
  For all $A \in \Fm$ and $U \in U^*$, $U \Vdash_I A$ iff $W_U, U \Vvdash_{G_I} A$.
\end{proposition}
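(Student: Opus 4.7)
The plan is to proceed by induction on the structure of $A$. The essential observation, used in every nontrivial case, is that the node $U_1$ is the maximum element of its own $K_{U_1} = \{U' \in U^* \mid U' \preceq^* U_1\}$ under $\leq_{U_1}$; combined with prevalence in generation, this means that whenever $W_{U_1}, k \Vvdash_{G_I} C$ for some $k \in K_{U_1}$, we automatically have $W_{U_1}, U_1 \Vvdash_{G_I} C$, since prevalence produces a witness $\geq_{U_1} U_1$ which must be $U_1$ itself.

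The base case is immediate: $U \Vdash_I p$ iff $U \in v^*(p)$, which (using $U \in K_U$ and $v_U(p) = v^*(p) \cap K_U$) is equivalent to $U \in v_U(p)$, i.e., $\langle W_U, U \rangle \in v_{G_I}(p)$, i.e., $W_U, U \Vvdash_{G_I} p$. The $\land$ and $\lor$ cases unfold the clauses on both sides and apply the induction hypothesis directly; for $\lor$ and $\land$, a mild use of the observation above is convenient to collapse the existentials coming from prevalence in generation.

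The implication case is where the structural observation does the real work. For $(\Rightarrow)$, assume $U \Vdash_I A \to B$. Take any $W_{U_1} \succeq W_U$ and $k \in K_{U_1}$ with $k \geq_{U_1} U$, and suppose $W_{U_1}, k \Vvdash_{G_I} A$. By prevalence in generation we bump $k$ up to $U_1$, obtaining $W_{U_1}, U_1 \Vvdash_{G_I} A$; the induction hypothesis at $U_1$ then gives $U_1 \Vdash_I A$. Since $\Vdash_I$ is persistent and $U_1 \succeq^* U$, we have $U_1 \Vdash_I A \to B$, hence $U_1 \Vdash_I B$; the induction hypothesis again yields $W_{U_1}, U_1 \Vvdash_{G_I} B$, and we take $k'' := U_1$ (which is $\geq_{U_1} k$ since $k \preceq^* U_1$). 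For $(\Leftarrow)$, assume $W_U, U \Vvdash_{G_I} A \to B$, and take $U_1 \succeq^* U$ with $U_1 \Vdash_I A$; by the induction hypothesis, $W_{U_1}, U_1 \Vvdash_{G_I} A$. Applying the forcing clause for $A \to B$ with $W' = W_{U_1}$ and $k = U_1$ (both conditions, $W_{U_1} \succeq W_U$ and $U_1 \geq_{U_1} U$, are satisfied), we get a witness $k'' \in K_{U_1}$ with $k'' \geq_{U_1} U_1$; as $U_1$ is maximal in $K_{U_1}$, necessarily $k'' = U_1$, so $W_{U_1}, U_1 \Vvdash_{G_I} B$ and by the induction hypothesis $U_1 \Vdash_I B$.

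The main thing to guard against is notational confusion between the three orders at play — $\preceq^*$ on $U^*$, $\preceq$ on $\mathcal{U}$, and the restriction $\leq_{U_1}$ on each $K_{U_1}$ — together with the fact that a single element $U_1 \in U^*$ plays two roles: as a node of $I$ (the basic index in $\Vdash_I$) and as the top node of the strict finitistic model $W_{U_1}$ it generates (the role it plays in $W_{U_1}, U_1 \Vvdash_{G_I}$). The argument hinges on this dual role: prevalence in generation collapses any existential quantifier over nodes of $W_{U_1}$ into the point $U_1$, and the induction hypothesis at $U_1$ then bridges the $\Vvdash_{G_I}$-side and the $\Vdash_I$-side.
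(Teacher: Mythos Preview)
Your proof is correct and follows essentially the same approach as the paper's: induction on $A$, with the implication case handled via the key observation that $U_1$ is the maximum of $K_{U_1}$. One small remark: in the $(\Rightarrow)$ direction of $\to$, the paper gets from $W_{U_1}, k \Vvdash_{G_I} A$ to $W_{U_1}, U_1 \Vvdash_{G_I} A$ by straightforward \emph{persistence} (since $k \leq_{U_1} U_1$ by maximality), which is slightly more direct than your route via prevalence in generation; and the $\land$, $\lor$ cases here need no appeal to prevalence at all, since the statement has no existential over nodes (unlike Proposition~\ref{proposition: G to J transformation}).
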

\begin{proof}
  Induction. ($\to$) ($\implies$) Let $U \Vdash_I B \to C$, and $W_{U'}, U'' \Vvdash_{G_I} B$ with $U \preceq U'' \preceq U'$. Then, since $W_{U'}, U' \Vvdash_{G_I} B$ by persistence, $U' \Vdash_I B$. Therefore $U' \Vdash_I C$. So $W_{U'}, U' \Vvdash_{G_I} C$, ($\impliedby$) Suppose $W_U, U \Vvdash_{G_I} B \to C$, $U \preceq^* U'$ and $U' \Vdash_I B$. Then $W_{U'}, U' \Vvdash_{G_I} B$. Since $U'$ is the maximum of $\langle K_{U'}, \leq_{U'} \rangle$, $W_{U'}, U' \Vvdash_{G_I} C$ by the supposition.
\end{proof}

These correspondences between $\mathcal{G}$ and $\mathcal{J}$ yield that the totality of the formulas valid in all $G \in \mathcal{G}$ coincides with \textbf{IPC}. We note that as easily seen, $\vdash_{\IPC} A$ iff $\Vdash_I^V A$ for all $I \in \mathcal{J}$.
\begin{proposition}\label{proposition: Validity in G is IPC}
  For all $A \in \Fm$, (i) $\Vvdash_{\mathcal{G}}^V A$ iff (ii) $\Vdash_{I_G}^V A$ for all $G \in \mathcal{G}$ iff (iii) $\vdash_{\IPC} A$.
\end{proposition}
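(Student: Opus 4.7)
The plan is to close the cycle (i) $\Rightarrow$ (ii) $\Rightarrow$ (iii) $\Rightarrow$ (i), leaning on propositions \ref{proposition: G to J transformation} and \ref{proposition: J to G transformation}, corollary \ref{corollary: Assertibility of implication in g-structure is validity}, and the completeness of $\IPC$ with respect to $\mathcal{J}$ already noted before the proposition.

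For (i) $\Rightarrow$ (ii), I would fix $G \in \mathcal{G}$, observe that persistence of $\Vvdash_G$ reduces $\Vvdash_G^V A$ to $R_G, r_G \Vvdash_G A$, apply proposition \ref{proposition: G to J transformation} (instantiating its existential with $k := r_G$) to obtain $R_G \Vdash_{I_G} A$, and conclude $\Vdash_{I_G}^V A$ by persistence of $\Vdash_{I_G}$. For (ii) $\Rightarrow$ (iii), given an arbitrary $I = \langle U^*, \preceq^*, v^* \rangle \in \mathcal{J}$, I would form $G_I$ and verify that $I_{G_I}$ is isomorphic to $I$: by upward closure of $v^*$, the defining clause ``$W_U$ is in the valuation of $I_{G_I}$ at $p$ iff some $U' \preceq^* U$ is in $v^*(p)$'' collapses to ``$U \in v^*(p)$''. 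Then $\Vdash_I^V A$ becomes $\Vdash_{I_{G_I}}^V A$, which holds by (ii); since $I$ was arbitrary in $\mathcal{J}$, the stated $\IPC$-completeness gives $\vdash_{\IPC} A$.

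For (iii) $\Rightarrow$ (i), I would induct on the structure of $A$, fixing $G$ throughout. The atomic cases are vacuous, as neither $\bot$ nor a variable is an $\IPC$-theorem. If $A = B \land C$, the standard $\IPC$-fact that $\vdash_{\IPC} B \land C$ iff $\vdash_{\IPC} B$ and $\vdash_{\IPC} C$ feeds $B$ and $C$ into the induction hypothesis, yielding $\Vvdash_G^V B$ and $\Vvdash_G^V C$ and hence $\Vvdash_G^V A$. If $A = B \lor C$, the disjunction property of $\IPC$ gives $\vdash_{\IPC} B$ or $\vdash_{\IPC} C$, and either case closes via the induction hypothesis. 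If $A = B \to C$, I would invoke the already-established (iii) $\Rightarrow$ (ii) to get $\Vdash_{I_G}^V A$, hence $R_G \Vdash_{I_G} A$; proposition \ref{proposition: G to J transformation} then furnishes some $k \in K_{R_G}$ with $R_G, k \Vvdash_G A$; corollary \ref{corollary: Assertibility of implication in g-structure is validity} promotes this to $R_G, r_G \Vvdash_G A$; and persistence of $\Vvdash_G$ in $G$ and in each $W$ delivers $\Vvdash_G^V A$.

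The main obstacle is the implication case of (iii) $\Rightarrow$ (i). Modus Ponens does not in general preserve $\Vvdash_G^V$-validity: from $W, k \Vvdash_G A$ and $W, k \Vvdash_G A \to B$ one only extracts $W, k'' \Vvdash_G B$ for some $k'' \geq_W k$ rather than at $k$ itself, so an induction on an $\IPC$-derivation of $A$ is blocked, and for an atomic subformula (ii)-at-$G$ can hold while (i)-at-$G$ fails. The decisive point is that corollary \ref{corollary: Assertibility of implication in g-structure is validity} collapses ``some $k$ forces $A$'' down to ``$r_G$ forces $A$'' whenever $A$ is an implication, which is precisely what persistence needs in order to propagate forcing to every pair $\langle W, k \rangle \in K_G$; the structural induction on $A$, together with the standard $\IPC$ metaproperties for $\land$ and $\lor$, packages this into a proof that touches atoms only vacuously.
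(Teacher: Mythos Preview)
Your proof is correct and follows essentially the same route as the paper: the same cycle (i) $\Rightarrow$ (ii) $\Rightarrow$ (iii) $\Rightarrow$ (i), with (iii) $\Rightarrow$ (i) handled by structural induction on $A$ using the disjunction property for $\lor$ and, for $\to$, the combination of $\IPC$-soundness over $\mathcal{J}$, proposition~\ref{proposition: G to J transformation}, and corollary~\ref{corollary: Assertibility of implication in g-structure is validity} (which, as you rightly stress, makes the induction hypothesis unnecessary in that case). The only cosmetic difference is in (ii) $\Rightarrow$ (iii): the paper composes propositions~\ref{proposition: J to G transformation} and~\ref{proposition: G to J transformation} at the root, whereas you verify directly that $I_{G_I}\cong I$; both arguments are equivalent.
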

\begin{proof}
  (i $\! \implies \!$ ii) Let $G \in \mathcal{G}$. $\Vvdash_G^V A$ implies $R_G, r_G \Vvdash_G A$, and hence $R_G \Vdash_{I_G} A$.
  
  (ii $\implies$ iii) Let $H = \langle U^*, \preceq^*, v^* \rangle \in \mathcal{J}$, and $R$ be $H$'s root. Then we have $\Vdash_H^V A$ iff $R \Vdash_H A$ iff $W_R, R \Vvdash_{G_H} A$ iff $W_R \Vdash_{I_{G_H}} A$. (ii) implies the last.
  
  (iii $\implies$ i) Prove by induction that for all $A$, $\vdash_{\IPC} A$ implies $\Vvdash_{\mathcal{G}}^V A$. The basis and ($\land$) are trivial. ($\lor$) By disjunction property of $\IPC$. ($\to$) does not depend on the induction hypothesis. Assume $\Vdash_I^V B \to C$ for all $I \in \mathcal{J}$. Let $G = { \langle \mathcal{U}, \preceq \rangle } \in \mathcal{G}$. By corollary \ref{corollary: Assertibility of implication in g-structure is validity}, $R_G, r_G \Vvdash_G B \to C$ iff $R_G, k \Vvdash_G B \to C$ for some $k \in K_{R_G}$. By the assumption we have $R_G \Vdash_{I_G} B \to C$. Apply proposition \ref{proposition: G to J transformation}.
\end{proof}

These results may be of philosophical interest. $\HT \subsetneq \textbf{SF} \subsetneq \textbf{CPC}$ (cf. corollary \ref{corollary: Assertibility is CPC}, proposition \ref{proposition: HT subseteq SF}) means that $\SF$ is too strong to be properly intermediate. Yet it has an intimate relation with $\IPC$: propositions \ref{proposition: J to G transformation} and \ref{proposition: Validity in G is IPC} seem to endorse the conceptual relation between strict finitism and intuitionism mentioned in section \ref{subsection: Characteristics (ii): Relation with intuitionism}. The prevalence conditions (cf. propositions \ref{proposition: Prevalence property} and \ref{proposition: Full prevalence property}) may naturally be suspected to be responsible, but we do not yet exactly know how they are contributing.

\section{Ending remarks: Further topics}\label{section: Ending remarks: Further topic}

This article only presents our first attempt to classically formalise Wright's strict finitistic logic, and we must leave at least two topics for further investigations. (i) Wright pointed out that the atomic prevalence cannot be assumed in general (cf. section \ref{section: Methods}), and his sketch already included the predicate part. The semantics needs be extended in these two directions. Without the prevalence conditions, the connection with \textbf{CPC} will be lost, and $\neg A$ will separate from $A \to \bot$, although Peirce's law and $\neg A \lor \neg \neg A$ may remain valid. Investigating what the logic looks like might provide an answer to the philosophical question mentioned above. Meanwhile, the predicate part of our semantics should involve quantification ranging over all objects in the domain, not only those after one node, due to the global nature of negation. We are preparing an article for these purposes, with a sound and complete proof system with respect to the extended semantics.

(ii) We maintained Wright's forcing conditions, and as part of our classical idealisation, accepted any finite lengths of time-gap in implication (cf. section \ref{subsection: Characteristics (i): Strict finitistic implication}). Indeed, from the literature it appears hardly possible to philosophically motivate a specific number as a maximum length. However, developing a theory of restricted implication may be of interest. Intuitionistic implication is strict finitistic implication with time-gap 0. If we associate $\omega$ to $\SF$ because the consequent can come within any $n < \omega$ steps, then it might be reasonable to associate $1$ to $\IPC$, since $0 < 1$. With a theory of implication with various lengths, then we might see a gradation of logics between \textbf{SF} and \textbf{IPC}.

\section*{Acknowledgements}
\begin{itemize}
  \item This study was funded under the name of `Graduate Scholarship for Degree Seeking Students' by Japan Student Services Organisation.
  \item I would like to thank Rosalie Iemhoff for her everyday, most insightful and helpful pieces of advice. I am also grateful to Amirhossein Akbar Tabatabai for his penetrating comments: indeed, it was he who first suggested the relation with $\IPC$. I sincerely thank an anonymous referee for extremely helpful comments and suggestions. Some of the rudimentary versions of the present article were presented at the OZSW Conference 2020, the 3rd Workshop on Proof Theory and its Applications in 2021 and the Dutch Logic PhD Day 2022. This study was conducted under the doctoral supervision by Professor Rosalie Iemhoff at Utrecht University.
\end{itemize}

%%%%Article's Body Ends%%%%

%%%%Bibliography%%%%

%%%%!!!! With some technical difficulty, do as follows when you add the reference.
%%%%[PROCESS] (i) set the documentclass to be \documentclass[sn-basic]{sn-jnl}. (ii) Enable \bibliography{PhD_Literature} by deleting the "%". (iii) Compile on Overleaf. (iv) Download the output .bbl file from Overleaf. (v) Copy-and-paste the .bbl file's content below. (vi) Disable \bibliography{PhD_Literature}. (vii) set the documentclass to be \documentclass[default]{sn-jnl}. (viii) Compile on Overleaf.
%%%%[NOTE] If you change what you cite, do the same process again.
%%%%[EXPLANATION] The difficulties are as follows. (a) The reference list has "???" (placeholders) for in-book literature items, if the option is in e.g. "sn-mathphys". So, to make a decent reference list, set the document class to be basic. (b) The document class "basic" gives you citations with the names of the authors, e.g. "Wright (1982)", whether it is \cite{} or \citep{}. The Journal of Philosophical Logic prefers the bracket-and-number format, like "[7]". So set the document class to be default. (c) The document class "default" does not generate the reference list. So make with another class, and copy-and-paste the content of the .bbl file.

%\bibliography{PhD_Literature}

%%Copy-and-paste the .bbl file's content below%%

\end{document}